\documentclass{amsart}
\pagestyle{headings}


\setlength{\parindent}{2ex}

\addtolength{\hoffset}{-0.5in}
\addtolength{\textwidth}{1cm}

\numberwithin{equation}{section}

\usepackage{bbm}
\usepackage{comment}
\usepackage{verbatim}
\usepackage{mathrsfs}
\usepackage{latexsym}
\usepackage{epsfig}
\usepackage{amsmath}
\usepackage[usenames,dvipsnames]{xcolor}
\usepackage{bbm}
\usepackage{graphics}
\usepackage{amssymb}
\usepackage{amsthm}
\usepackage[alphabetic]{amsrefs}
\usepackage{amsopn}
\usepackage{amscd}
\usepackage[all,knot]{xy}
\xyoption{all}
\usepackage{rotating}
\usepackage{tikz}
\usetikzlibrary{calc}
\usepgflibrary{shapes.geometric}
\usepgflibrary{shapes.misc}
\usetikzlibrary{positioning}
\usetikzlibrary{decorations}
\usetikzlibrary{decorations.pathreplacing}
\usepackage{hyperref}

\theoremstyle{plain}
\newtheorem{thm}{Theorem}[subsection]
\newtheorem{lem}[thm]{Lemma}
\newtheorem{prop}[thm]{Proposition}
\newtheorem{cor}[thm]{Corollary}

\newtheorem*{thm*}{Theorem}
\newtheorem*{lem*}{Lemma}
\newtheorem*{prop*}{Proposition}
\newtheorem*{cor*}{Corollary}

\theoremstyle{definition}
\newtheorem{defn}[thm]{Definition}
\newtheorem*{defn*}{Definition}

\newtheorem{ex}[thm]{Example}
{}
\newtheorem{rem}[thm]{Remark}
\newtheorem*{rem*}{Remark}

{}
\newtheorem{convention}[thm]{Convention}{}
{}
\newtheorem*{ack}{Acknowledgements}{}

\theoremstyle{remark}
{}
{}
{}

\def\to{\longrightarrow}

\def\NN{\mathbb{N}}

\def\ZZ{\mathbb{Z}}

\def\AA{\mathbb{A}}
\def\PP{\mathbb{P}}
\def\FF{\mathbb{F}}
\def\str{\mathcal{O}}

\def\a{\alpha}

\def\sfD{\mathsf{D}}

\def\sfT{\mathsf{T}}

\def\sfC{\mathsf{C}}

\def\perf{\mathrm{perf}}
\def\bdd{\mathrm{b}}
\def\sg{\mathrm{sg}}

\def\dgcatk{\mathsf{DGcat}_k}

\DeclareMathOperator{\End}{End}
\DeclareMathOperator{\Spec}{Spec}

\DeclareMathOperator{\coker}{coker}

\DeclareMathOperator{\id}{id}

\DeclareMathOperator{\Hom}{Hom}
\DeclareMathOperator{\RHom}{\mathbf{R}Hom}
\DeclareMathOperator{\RuHom}{\mathbf{R}\underline{Hom}}

\DeclareMathOperator{\Ext}{Ext}

\DeclareMathOperator{\cone}{cone}

\DeclareMathOperator{\colim}{colim}

\DeclareMathOperator{\modu}{\mathsf{mod}}

\DeclareMathOperator{\smodu}{\underline{\mathsf{mod}}}

\DeclareMathOperator{\gr}{\mathsf{gr}}

\DeclareMathOperator{\qgr}{\mathsf{qgr}}

\DeclareMathOperator{\sgr}{\underline{\mathsf{gr}}}

\DeclareMathOperator{\coh}{coh}

\DeclareMathOperator{\thick}{thick}

\DeclareMathOperator{\Perf}{\mathsf{Perf}}

\DeclareMathOperator{\tors}{\mathsf{tors}}
\DeclareMathOperator{\rad}{rad}

\def\a1{\mathbb{E}}

\title{Homotopy invariants of singularity categories}
\author{Sira Gratz}
\address{
Sira Gratz, School of Mathematics and Statistics,
University of Glasgow,
University Place,
Glasgow G12 8SQ
}
\email{Sira.Gratz@glasgow.ac.uk}
\author{Greg Stevenson}
\address{Greg Stevenson, School of Mathematics and Statistics,
University of Glasgow,
University Place,
Glasgow G12 8SQ
}
\email{gregory.stevenson@glasgow.ac.uk}
\urladdr{http://www.maths.gla.ac.uk/~gstevenson/}


\keywords{DG-categories, orbit categories, K-theory, singularity categories, $\AA^1$-homotopy invariants}

\begin{document}

\begin{abstract}
\noindent We present a method for computing $\mathbb{A}^1$-homotopy invariants of singularity categories of rings admitting suitable gradings. Using this we describe any such invariant, e.g.\ homotopy K-theory, for the stable categories of self-injective algebras admitting a connected grading. A remark is also made concerning the vanishing of all such invariants for cluster categories of type $A_{2n}$ quivers.
\end{abstract}

\maketitle


\setcounter{tocdepth}{1}
\tableofcontents


\section{Introduction}

Gradings often make life significantly easier. For instance, if $\Lambda$ is an exterior algebra on $n+1$ generators over a field $k$ then, although its stable category $\smodu \Lambda$ cannot even have a non-trivial t-structure, if we standard grade $\Lambda$ then the graded stable category $\sgr \Lambda$ has a full strong exceptional collection. This makes computing $\AA^1$-homotopy invariants of $\sgr \Lambda$ very easy\textemdash{}they are just given by $n+1$ copies of the corresponding invariant evaluated at the base field. However, one doesn't always want to work with graded modules. In this article we exploit work of Tabuada \cite{TabuadaA1} and Keller, Murfet, and Van den Bergh \cite{KMV} to describe the invariants of $\smodu \Lambda$ in terms of those of $\sgr \Lambda$, the graded stable category.

After covering the required preliminaries on orbit categories, $\AA^1$-homotopy invariants, and singularity categories in Section~\ref{sec:prelims} we use Tabuada's work on $\AA^1$-homotopy invariants of orbit categories to present, in Theorem~\ref{thm_main1}, a cofibre sequence relating invariants of graded and ungraded singularity categories. We then specialise to finite dimensional algebras and exploit the very strong results on existence of tilting objects for graded singularity categories to perform concrete computations. In particular, we show in Theorem~\ref{thm:Frobenius} that if $\Lambda$ is a finite dimensional self-injective $k$-algebra admitting a connected grading then, for any $\AA^1$-homotopy invariant $\a1$, we have
\begin{displaymath}
\a1(\smodu \Lambda) \cong \cone (\a1(k) \xymatrix{\ar[rr]^-{\boldsymbol{\cdot}\dim \Lambda}&&}\a1(k)).
\end{displaymath}
This generalizes the computation of $K_0(\smodu \Lambda)$ for such algebras by Tachikawa and Wakamatsu \cite{TW91}.

In the final section we discuss a special case of a result of Tabuada concerning $\AA^1$-homotopy invariants of cluster categories. In \cite{TabuadaA1}*{Corollary~2.11} a presentation for the $\AA^1$-homotopy invariants of cluster categories of finite acyclic quivers is given in terms of a cofibre sequence. Using this we point out that for the Dynkin quivers $A_{2n}$ this actually implies that all $\AA^1$-homotopy invariants of the corresponding cluster category vanish.

\begin{ack}
We are grateful to Sebastian Klein for inspiring conversations, originating from discussions on tt-Chow groups, which led us to the considerations which were the genesis of this work. We are also grateful to the anonymous referee for several helpful comments which improved the exposition. The second author thanks Lance Gurney and Shane Kelly for precious discussions.
\end{ack}



\section{Preliminaries}\label{sec:prelims}

Our main result arises from putting together several observations made by others. In this section we recall some salient details regarding the ingredients we need. This also serves to fix ideas and notation for the rest of the article.

Throughout we will work over a fixed base field, which we will denote by $k$, and by DG-category we always mean DG-category over $k$. Things could, as usual, be extended to more general base rings but we remain in the simplest case for the sake of avoiding technicalities in the exposition.


\subsection{Homotopy invariants of orbit categories}\label{sec:hoinv}

We begin with a brief review of orbit categories. For further details the reader can consult \cite{KellerOrbit}.

Let $\sfC$ be a DG-category and suppose we are given a DG-functor $F\colon C\to C$ such that $F$ is a quasi-equivalence, and hence $H^0(F)$ is an equivalence of categories. The DG-orbit category of $\sfC$ with respect to $F$, denoted by $\sfC/F$, is the DG-category whose objects are the same of those of $\sfC$ and whose morphism complexes are defined by
\begin{displaymath}
\sfC/F(c,c') = \colim_{i\in \NN}\limits (\bigoplus_{j\in \NN} \sfC(F^jc, F^i c'))
\end{displaymath}
where the transition maps are the obvious ones, namely
\begin{displaymath}
\xymatrix{
\bigoplus\limits_{j\in \NN}\sfC(F^jc, c') \ar[r]^-{\oplus F} & \bigoplus\limits_{j\in \NN}\sfC(F^jc, F^1c') \ar[r]^-{\oplus F} & \bigoplus\limits_{j\in \NN}\sfC(F^jc, F^2c') \ar[r] & \cdots
}
\end{displaymath}
The DG-category structure is uniquely induced from that of $\sfC$ using functoriality of $F$, compatibility of tensor products with colimits, and the universal property of colimits.

One can check that, upon taking the homotopy category, this gives the more familiar formula
\begin{displaymath}
H^0(\sfC/F)(c,c') = \bigoplus_{i\in \ZZ}H^0(\sfC)(c, H^0(F)^i c')
\end{displaymath}
and has the effect of making $H^0(F)$ isomorphic to the identity functor. Put a little more carefully, there is a canonical functor $\pi\colon \sfC \to \sfC/F$ together with a natural transformation $\pi \to \pi F$ which becomes an isomorphism after taking homotopy. The DG-orbit category equipped with the canonical projection functor and natural transformation as above is initial in the appropriate sense with respect to triples of such data.

\begin{rem}
Technically speaking one is stabilizing (at the DG level) and taking orbits (at the triangulated level) with respect to the action of the additive monoid $\NN$ and its group completion $\ZZ$, generated by the action of the chosen functor, and so should indicate this somehow in the notation. But, since we shall only ever work with a single functor we omit such decorations from the notation.
\end{rem}

\begin{rem}
The formula for the morphism complexes in the DG-orbit category simplifies if $F$ is an honest equivalence of DG-categories: in this case
\begin{displaymath}
\sfC/F(c,c') \cong \bigoplus_{i\in \ZZ} \sfC(c, F^ic')
\end{displaymath}
(as is always the case after taking homology).
\end{rem}

We note that even if $\sfC$ is pretriangulated, so that $H^0(\sfC)$ is triangulated, and $F$ is an honest DG-equivalence, it may no longer be the case that $\sfC/F$ is pretriangulated. Of course, one of the draws of the DG-setting is that we can just take $\Perf(\sfC/F)$ to remedy this situation, where $\Perf(\sfC/F)$ denotes the DG-category of perfect DG-modules over $\sfC/F$. This being said, it is natural at this juncture to lay our cards on the table concerning the standing hypotheses we will make about existence of cones and idempotent completeness.

\begin{convention}\label{conv:horror}
Unless explicitly mentioned otherwise we assume our DG-categories are pretriangulated with idempotent complete homotopy categories. In particular, despite the generality in which we have defined things above, for us $\sfC$ will always be quasi-equivalent to $\Perf(\sfC)$ and by $\sfC/F$ we will really mean $\Perf(\sfC/F)$. This is, without doubt, an abuse. But in our examples all Verdier quotients will be idempotent complete and our main focus is invariants which invert derived Morita equivalences and so it is a harmless abuse.
\end{convention}

On the occasions when we need to explicitly discuss idempotent completion we will use $\natural$ to denote it.

\smallbreak
\begin{center}
*\ *\ *
\end{center}

We now give a quick review of $\AA^1$-homotopy invariants. We let $\dgcatk$ denote the category of (essentially) small DG-categories over $k$, i.e.\ this is the category with objects the small DG-categories and morphisms given by isomorphism classes of DG-functors. In addition we fix some triangulated category $\sfT$.

We recall that a localization sequence of DG-categories is, essentially, the inclusion of a thick subcategory followed by the corresponding Verdier quotient (the catch being this is only up to Morita equivalence). Some further details and equivalent formulations can be found in \cite{KellerDG}*{Theorem~4.11}.

\begin{defn}
A functor $\a1\colon \dgcatk \to \sfT$ is an $\AA^1$\emph{-homotopy invariant} if:
\begin{itemize}
\item[(1)] $\a1$ sends derived Morita equivalences to isomorphisms, in particular for any DG-category $\sfC$ the canonical inclusion $\sfC\to \Perf(\sfC)$ is sent to an isomorphism by $\a1$;
\item[(2)] $\a1$ sends localization sequences of DG-categories to triangles;
\item[(3)] $\a1$ inverts the canonical inclusion
\begin{displaymath}
\sfC \to \sfC[t] = \sfC\otimes_k k[t]
\end{displaymath}
for every DG-category $\sfC$, where $k[t]$ is concentrated in degree $0$.
\end{itemize}
\end{defn}

Important examples are given by variants of $K$-theory, for instance Weibel's homotopy $K$-theory and the topological $K$-theory of DG-categories (see \cite{AHK-theory} for the latter), and by periodic cyclic homology; see \cite{TabuadaFT} for further details.

\begin{rem}
Let us stress that, in general, $K$-theory satisfies only condition (1). By forcing it to satisfy condition (2) one gets non-connective $K$-theory, and then forcing that to satisfy (3) one gets homotopy $K$-theory, which we denote by $KH$.  Throughout, we will work generally work with homotopy $K$-theory, as we will rely on work of Tabuada which in turn relies on $\AA^1$-invariance. Homotopy $K$-theory is, in an appropriate sense, universal and so we can deduce much about general $\AA^1$-invariants by understanding $KH$ and in particular $KH_0$. The problem is that $KH_0$ is more involved to compute than $K_0$. As a result our general strategy will be to make arguments allowing us to reduce to computing honest Grothendieck groups.
\end{rem}

\begin{rem}\label{rem:SOD}
One consequence of the definition is that an $\AA^1$-homotopy invariant sends semi-orthogonal decompositions to direct sum decompositions. This is already true for \emph{additive invariants}, i.e.\ functors satisfying (1), and follows from functoriality: any adjoint to a fully faithful inclusion is sent to a retract by such a functor.

It follows that if $H^0\Perf(\sfC)$ has a full exceptional collection $(E_1,\ldots, E_n)$ then
\begin{displaymath}
\a1(\sfC) \cong \a1(k)^{\oplus n}
\end{displaymath}
in the target category $\sfT$ (throughout we assume that for a collection to be exceptional each $\thick(E_i)$ is admissible).
\end{rem}

The main fact which we will need concerning $\AA^1$-homotopy invariants is that they are compatible with taking orbits.

\begin{thm}[\cite{TabuadaA1}*{Theorem~1.5}]\label{thm:tabuada}
Let $\sfC$ be a DG-category and $F\colon \sfC\to \sfC$ a quasi-equivalence. Then for any $\AA^1$-homotopy invariant $\a1\colon \dgcatk \to \sfT$ there is a distinguished triangle
\begin{displaymath}
\xymatrix{
\a1(\sfC) \ar[rr]^-{\a1(F) - \id} && \a1(\sfC) \ar[r]^-{\a1(\pi)} & \a1(\sfC/F) \ar[r] & \Sigma \a1(\sfC)
}
\end{displaymath}
where $\pi\colon \sfC\to \sfC/F$ is the canonical DG-functor.
\end{thm}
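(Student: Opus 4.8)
The plan is to build the distinguished triangle of Theorem~\ref{thm:tabuada} directly from a localization sequence of DG-categories, to which one then applies properties (1)--(3) of an $\AA^1$-homotopy invariant. First I would set up the correct ambient category: replace $\sfC$ by a cofibrant model and form, instead of $\sfC/F$ itself, an auxiliary DG-category $\sfC_F$ that witnesses the ``mapping torus'' of the endofunctor $F$. Concretely, one expects a diagram of DG-categories
\begin{displaymath}
\xymatrix{
\sfC \ar[r]^-{j} & \sfC_F \ar[r]^-{q} & \sfC/F
}
\end{displaymath}
which is a localization (Verdier quotient) sequence at the level of $\Perf$, where $\sfC_F$ is essentially $\sfC \otimes_k k[t]$ glued along the two inclusions $\id$ and $F$ of $\sfC$ into $\sfC[t]$ (the endpoints of the affine line), so that its perfect modules form a category in which $\sfC$ sits as the kernel of the quotient onto the orbit category. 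This is precisely the input that Tabuada uses; the geometric picture is the cofibre sequence computing the homology of a mapping torus.

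Given such a sequence, the argument is formal. Applying $\a1$ to the localization sequence yields, by property (2), a triangle
\begin{displaymath}
\xymatrix{
\a1(\sfC) \ar[r]^-{\a1(j)} & \a1(\sfC_F) \ar[r] & \a1(\sfC/F) \ar[r] & \Sigma\a1(\sfC).
}
\end{displaymath}
Next I would identify $\a1(\sfC_F)$. Since $\sfC_F$ is built from two copies of $\sfC$ and a copy of $\sfC[t]$ in a homotopy-pushout fashion, a Mayer--Vietoris argument\textemdash{}again using that $\a1$ sends localization sequences to triangles, together with $\AA^1$-invariance (property (3)) to kill the $k[t]$ contribution down to $\a1(\sfC)$\textemdash{}shows $\a1(\sfC_F)\cong \a1(\sfC)$, and under this identification the composite $\a1(j)$ becomes the difference of the two structure maps, namely $\a1(F)-\id$. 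Here property~(1), invariance under derived Morita equivalence, is what lets us pass freely between $\sfC$ and $\Perf(\sfC)$ and between the various models. Rotating the resulting triangle and inserting $\a1(\pi)$ for the induced map $\a1(\sfC)\to\a1(\sfC/F)$ (noting that $q\circ j$ is, up to the identifications, the canonical projection $\pi$) gives exactly the claimed triangle.

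The main obstacle is the first step: constructing $\sfC_F$ precisely and verifying that $\sfC \to \sfC_F \to \sfC/F$ really is a localization sequence of DG-categories in the sense required by property~(2), with kernel genuinely $\Perf(\sfC)$ and quotient genuinely $\Perf(\sfC/F)$. One must be careful about cofibrancy/flatness so that the tensor products compute the correct derived objects, about the idempotent-completeness caveats of Convention~\ref{conv:horror} (the Verdier quotient must be idempotent complete, or one works with the completion $\natural$), and about checking that the two inclusions of $\sfC$ into the mapping torus induce, after $\a1$, precisely $\id$ and $\a1(F)$ rather than these up to some unwanted automorphism. Once the localization sequence and the computation $\a1(\sfC_F)\simeq\a1(\sfC)$ are in place, everything else is diagram-chasing in the triangulated target $\sfT$. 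Since this is exactly the content of \cite{TabuadaA1}*{Theorem~1.5}, in practice I would simply cite it; the proposal above is how one would reconstruct it.
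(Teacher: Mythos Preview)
The paper does not prove this theorem at all: it is stated with the citation \cite{TabuadaA1}*{Theorem~1.5} and used as a black box, so there is no ``paper's own proof'' to compare against. Your final sentence already acknowledges this, and indeed the correct move in the context of this paper is simply to cite Tabuada.

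That said, your sketch of how Tabuada's argument runs is essentially accurate. The proof in \cite{TabuadaA1} does proceed by constructing a DG mapping-torus type object (there denoted via a Laurent-polynomial construction rather than $k[t]$ directly), obtaining a localization sequence whose quotient is the orbit category, and then using $\AA^1$-invariance to identify the middle term with $\a1(\sfC)$ so that the inclusion becomes $\a1(F)-\id$. Your identification of the delicate points\textemdash{}checking the localization sequence is genuine at the $\Perf$ level, handling idempotent completion, and tracking that the two structure maps really give $\id$ and $\a1(F)$\textemdash{}is on target. So as a reconstruction this is fine; as a comparison with the paper, the paper simply invokes the result.
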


Using this theorem one can reduce computations of $\AA^1$-invariants $\a1(\sfC/F)$ to understanding the action of $\a1(F)$ on $\a1(\sfC)$. As we shall see this is often easier than trying to directly compute $\a1(\sfC/F)$.

\subsection{Graded and ungraded modules}\label{sec_graded}

In this section we recall some details on singularity categories and give a sketch of a result we will use which is due to Keller, Murfet, and Van den Bergh.

Throughout, as above, we fix a base field $k$. Let $A$ be a finitely generated noetherian graded $k$-algebra. Recall that $A$ is said to be \emph{connected} if $A$ is non-negatively graded and $A_0=k$.

We can associate with $A$ the category of finitely generated graded $A$-modules $\gr A$ and then go on to form the bounded derived category of finitely generated graded $A$-modules $\sfD^\bdd(\gr A)$ and the full subcategory of perfect complexes $\sfD^\perf(\gr A)$ within. Our convention throughout is to work with right modules.

The graded singularity category of $A$ is the quotient
\begin{displaymath}
\sfD_\sg(\gr A) = \sfD^\bdd(\gr A)/ \sfD^\perf(\gr A).
\end{displaymath}
Each of these categories comes equipped with a grading shift autoequivalence $(1)$ which is defined on graded modules by reindexing
\begin{displaymath}
M(i)_j = M_{i+j}.
\end{displaymath}

Similarly we can work with ungraded $A$-modules and define $\sfD_\sg(\modu A)$. We say that an $A$-module $M$ is \emph{gradable} if there is a graded $A$-module whose underlying module is $M$.

All of the triangulated categories mentioned above are algebraic and thus have DG-enhancements; these are not necessarily unique in the case of the singularity category, and so our convention is to work with the canonical one i.e.\ the one induced by viewing the singularity category as a localization of the bounded derived category, see \cites{Drinfeld,KellerQ}. So we have access to the definitions and tools mentioned in Section~\ref{sec:hoinv}. We will be slightly colloquial in our approach, and often use the homotopy incoherent language of triangulated categories. However, for $\AA^1$-homotopy invariants and orbit categories to make sense we require homotopy coherence and so the reader should have this in the back of her mind.

The result we wish to recall (in a slightly extended form) compares the two categories $\sfD_\sg(\gr A)$ and $\sfD_\sg(\modu A)$. There is an obvious exact comparison functor, given by forgetting the grading, 
\begin{displaymath}
F\colon \sfD_\sg(\gr A) \to \sfD_\sg(\modu A)
\end{displaymath}
which `factors' via a functor
\begin{displaymath}
\widetilde{F}\colon \sfD_\sg(\gr A) / (1) \to \sfD_\sg(\modu A)^\natural
\end{displaymath}
by the universal property of the orbit category. The reason for the scare quotes and the $\natural$ is that, in the process of forming the orbit category, we idempotent complete it and so we had better idempotent complete the target of the comparison functor (cf.\ Convention~\ref{conv:horror} and note that, in keeping with it, we drop the $\natural$ from now on). As an aside we note that in many cases, for instance if $A$ is complete, then the singularity category is already idempotent complete.

This comparison functor $\widetilde{F}$ is always an embedding.

\begin{lem}[\cite{KMV}*{Lemma~A.7}]
The functor $\widetilde{F}$ is fully faithful.
\end{lem}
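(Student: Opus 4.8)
The plan is to verify full faithfulness of $\widetilde{F}\colon \sfD_\sg(\gr A)/(1) \to \sfD_\sg(\modu A)$ by reducing to a concrete computation of morphism spaces, using the description of singularity categories as stable categories of (maximal Cohen--Macaulay, or Gorenstein-projective) modules when such a description is available, and in general working directly with the quotient description $\sfD^\bdd/\sfD^\perf$. Since $\widetilde{F}$ is induced by the forgetful functor out of the orbit category, the homotopy-category formula for orbit morphisms gives, for graded objects $M, N$,
\begin{displaymath}
\Hom_{\sfD_\sg(\gr A)/(1)}(M, N) \cong \bigoplus_{i\in\ZZ}\Hom_{\sfD_\sg(\gr A)}(M, N(i)),
\end{displaymath}
so the statement amounts to showing that the natural map
\begin{displaymath}
\bigoplus_{i\in\ZZ}\Hom_{\sfD_\sg(\gr A)}(M, N(i)) \to \Hom_{\sfD_\sg(\modu A)}(FM, FN)
\end{displaymath}
is bijective. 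First I would record that forgetting the grading sends graded projectives to projectives and hence is well-defined and exact on singularity categories, and that $F$ intertwines $(1)$ with the identity, which is what produces $\widetilde F$ in the first place.

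The key step is to identify $\Hom$ in the ungraded singularity category with a direct limit of $\Hom$-groups computed upstairs, and then to match this against the graded computation degree by degree. Concretely, for $M,N \in \sfD^\bdd(\gr A)$ one has $\Hom_{\sfD_\sg(\modu A)}(M,N) = \colim \Hom_{\sfD^\bdd(\modu A)}(M', N)$ where $M' \to M$ ranges over maps whose cone is perfect (equivalently one can use a stabilization/syzygy description); the point is that each finitely generated ungraded module appearing is gradable because $A$ is connected-type graded and, more importantly, that $\Hom_{\modu A}(M,N) = \bigoplus_i \Hom_{\gr A}(M, N(i))$ for finitely generated graded $M,N$ since $M$ is finitely generated. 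Passing this identity through the (co)limits and truncations defining the singularity category is the technical heart of the argument: one must check that the perfect-complex subcategory, the syzygy/stabilization operations, and the relevant colimits are all compatible with the internal grading, so that the ungraded $\Hom$ decomposes as the corresponding sum of graded $\Hom$'s. Surjectivity then follows because every ungraded morphism, being supported in finitely many internal degrees, is a finite sum of homogeneous pieces each of which lifts to some $\Hom_{\sfD_\sg(\gr A)}(M,N(i))$; injectivity follows because a graded morphism that becomes zero ungraded is already zero in each internal degree.

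The main obstacle I anticipate is not the algebra of graded versus ungraded $\Hom$ for modules\textemdash that is classical\textemdash but rather controlling the behaviour of this decomposition under the Verdier localization $\sfD^\bdd/\sfD^\perf$: one needs that a roof or a null-homotopy witnessing a relation in the \emph{ungraded} singularity category can be chosen homogeneously, i.e.\ that the perfect complexes and the morphisms trivializing things can be taken graded. This is where one uses that $A$ and all the modules in sight admit gradings in a compatible way, together with the fact that in the singularity category one may replace objects by (graded) truncations or syzygies at will; making the bookkeeping precise\textemdash especially tracking how the grading shift $(1)$ interacts with these replacements so that the direct sum over $i\in\ZZ$ emerges cleanly\textemdash is the part requiring care. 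Once that compatibility is in hand, fullness and faithfulness drop out simultaneously from the degreewise comparison.
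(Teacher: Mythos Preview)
The paper does not prove this lemma; it is quoted with a citation to \cite{KMV}*{Lemma~A.7} and no argument is supplied. So your proposal is being compared against the proof in Keller--Murfet--Van den Bergh rather than anything written here.

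Your strategy matches theirs: reduce to showing that
\[
\bigoplus_{i\in\ZZ}\Hom_{\sfD_\sg(\gr A)}(M, N(i)) \longrightarrow \Hom_{\sfD_\sg(\modu A)}(FM, FN)
\]
is bijective, using the module-level identity $\Hom_{\modu A}(M,N) \cong \bigoplus_i \Hom_{\gr A}(M, N(i))$ for finitely generated $M$, and then push this through the Verdier quotient. The way \cite{KMV} executes the last step is exactly the syzygy/stabilization description you gesture at: one expresses $\Hom_{\sfD_\sg}$ as a filtered colimit of $\Hom_{\sfD^\bdd}$'s along (graded) syzygy replacements, notes that syzygies of graded modules are graded so the whole system lives in $\gr A$, and then commutes the direct-sum decomposition over $i$ with the colimit. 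You have correctly located this as the only substantive step and correctly anticipated what makes it work.

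One correction: your aside that ``each finitely generated ungraded module appearing is gradable because $A$ is connected-type graded'' is false as stated (e.g.\ $k[x]/(x-1)$ over $k[x]$ with $\vert x\vert =1$) and is not what is needed. The point is not that arbitrary ungraded objects lift, but that the specific replacements one makes\textemdash syzygies, truncations, perfect approximations\textemdash remain graded when one starts from graded input. Once you phrase the argument entirely in terms of such graded replacements, the roof-lifting problem you worry about dissolves, because you never leave the graded world in the first place.
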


Since $\widetilde{F}$ is exact (and we're conflating $\sfD_\sg(\gr A) / (1)$ with its pretriangulated hull) the next lemma is an immediate consequence.

\begin{lem}\label{lem_orbitequiv}
If a classical generating set for $\sfD_\sg(\modu A)$ is gradable then 
\begin{displaymath}
\widetilde{F}\colon \sfD_\sg(\gr A) / (1) \to \sfD_\sg(\modu A)
\end{displaymath}
is an equivalence.
\end{lem}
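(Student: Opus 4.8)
The plan is to deduce the lemma from the preceding fully faithfulness result together with the standard fact that an exact, fully faithful functor between triangulated categories whose essential image contains a classical generating set, and whose essential image is thick, is an equivalence onto its target. Since $\widetilde{F}$ is already known to be fully faithful by \cite{KMV}*{Lemma~A.7}, and since it is exact by construction (it is induced on homotopy categories by a DG-functor, after passing to pretriangulated hulls), the only thing left to check is essential surjectivity, and for this it suffices to show that the essential image is a thick subcategory of $\sfD_\sg(\modu A)$ containing a set of classical generators.

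First I would observe that the essential image of a fully faithful exact functor between triangulated categories is a triangulated subcategory; since our convention is that all categories in sight are idempotent complete (Convention~\ref{conv:horror}), and $\widetilde{F}$ commutes with the relevant idempotent completions, this image is in fact a thick subcategory. Next, by hypothesis there is a classical generating set $\{M_\alpha\}$ for $\sfD_\sg(\modu A)$ each of whose members is gradable; choosing graded lifts $\widetilde{M_\alpha}$ and pushing them through the forgetful comparison functor $F$ — which factors through $\widetilde{F}$ up to the canonical projection $\sfD_\sg(\gr A)\to\sfD_\sg(\gr A)/(1)$ — exhibits each $M_\alpha$ in the essential image of $\widetilde{F}$. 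Therefore $\operatorname{Im}\widetilde{F}$ is a thick subcategory containing a classical generating set, hence all of $\sfD_\sg(\modu A)$, and $\widetilde{F}$ is essentially surjective. Combined with fully faithfulness this gives the asserted equivalence.

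The only genuine subtlety — and the step I would be most careful about — is the bookkeeping around idempotent completion and pretriangulated hulls: one must make sure that "classical generating set" is interpreted as generating under shifts, cones, \emph{and} direct summands, so that thickness of the image (rather than mere triangulated-subcategory-ness) is what is needed, and that the passage between $\sfD_\sg(\gr A)/(1)$ and its pretriangulated hull $\Perf$ does not disturb the comparison functor's behaviour on objects. Given Convention~\ref{conv:horror} this is exactly the setup in which these issues evaporate, so in the write-up I would simply invoke the convention and note that the lemma is then immediate from the previous one.
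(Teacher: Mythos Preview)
Your proposal is correct and follows essentially the same argument as the paper: use the previously established full faithfulness to see that the image of $\widetilde{F}$ is a thick subcategory, then note that gradability of a classical generating set forces this thick subcategory to be all of $\sfD_\sg(\modu A)$. The paper's write-up is terser, but your additional remarks about exactness and the role of Convention~\ref{conv:horror} in handling idempotent completion are exactly the points being silently invoked there.
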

\begin{proof}
Since $\widetilde{F}$ is fully faithful it embeds $\sfD_\sg(\gr A)/(1)$ as a thick subcategory. If a classical generating set for $\sfD_\sg(\modu A)$ is gradable then the image of $\widetilde{F}$ contains said generating set and so, since the image of $\widetilde{F}$ is thick, it must be all of $\sfD_\sg(\modu A)$.
\end{proof}

If $A$ is connected graded, so in particular graded local, then the trivial module $k = A/A_{\geq 1}$ is always gradable and we obtain the following observation of Keller, Murfet, and Van den Bergh.

\begin{prop}[\cite{KMV}*{Proposition~A.8}]
If $A$ is a finitely generated connected commutative graded $k$-algebra such that the augmentation ideal $A_{\geq 1}$ defines an isolated singularity in $\Spec A$ then $\widetilde{F}$ is an equivalence.
\end{prop}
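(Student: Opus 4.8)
The plan is to deduce this from Lemma~\ref{lem_orbitequiv}: it suffices to exhibit a gradable classical generating set for $\sfD_\sg(\modu A)$. As noted just before the statement, the trivial module $k = A/A_{\geq 1}$ is gradable, so everything comes down to the single assertion that $k$ classically generates $\sfD_\sg(\modu A)$, i.e.\ $\sfD_\sg(\modu A) = \thick(k)$; granting this, Lemma~\ref{lem_orbitequiv} applies and $\widetilde F$ is an equivalence. (If $A$ is already regular both sides of $\widetilde F$ vanish and there is nothing to prove.)

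To establish $\sfD_\sg(\modu A) = \thick(k)$ I would first unpack the hypothesis. Write $\mathfrak{m} = A_{\geq 1}$, a maximal ideal since $A/\mathfrak{m} = k$; that $\mathfrak{m}$ defines an isolated singularity means exactly that $A_{\mathfrak{p}}$ is regular, hence $\sfD_\sg(\modu A_{\mathfrak{p}}) = 0$, for every prime $\mathfrak{p} \neq \mathfrak{m}$. Localization $M \mapsto M_{\mathfrak{p}}$ is exact and preserves both finite generation and perfection, so it descends to exact functors $\sfD_\sg(\modu A) \to \sfD_\sg(\modu A_{\mathfrak{p}})$; consequently every object of $\sfD_\sg(\modu A)$ becomes zero after localizing at any $\mathfrak{p} \neq \mathfrak{m}$, i.e.\ every finitely generated $A$-module has finite projective dimension over each such $A_{\mathfrak{p}}$. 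I would then run a dévissage. For finitely generated $M$, the syzygy $\Omega^n M$ with $n \geq \dim A$ differs from $M$ only by a shift in $\sfD_\sg(\modu A)$ and, by the Auslander--Buchsbaum formula, is locally free on $\Spec A \setminus \{\mathfrak{m}\}$; for such a module all stable morphism modules (to itself, and to $k$) localize to zero away from $\mathfrak{m}$ and, being finitely generated, therefore have finite length over $A$. Finally, a finite-length $A$-module lies in $\thick(k)$ inside $\sfD_\sg(\modu A)$ — its $\mathfrak{m}$-primary part is a finite iterated extension of copies of $k$, while each part supported at another maximal ideal already vanishes in $\sfD_\sg$ by the isolated singularity hypothesis — so the standard dévissage for local isolated singularities lets one peel copies of $k$ off $\Omega^n M$ and conclude that $\Omega^n M$, hence $M$, lies in $\thick(k)$.

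The genuinely substantive step is this dévissage — the implication ``locally perfect away from $\mathfrak{m}$'' $\Rightarrow$ ``contained in $\thick(k)$'' — which is exactly where the isolated singularity hypothesis is essential; it is the heart of \cite{KMV}, and may equally be cited as a black box from the literature on singularity categories of isolated singularities. The one technical wrinkle in carrying it out directly is the finiteness and localization behaviour of stable $\Ext$ when $A$ is neither Gorenstein nor Cohen--Macaulay, which one can circumvent by first passing to the $\mathfrak{m}$-adic completion, since completion changes neither the singularity category (up to idempotent completion) nor the hypotheses. Everything else above — gradability of $k$, exactness of localization, the finite-length observation, and the invocation of Lemma~\ref{lem_orbitequiv} — is formal.
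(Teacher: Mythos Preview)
Your proposal is correct and follows exactly the route the paper indicates: the paper itself gives no proof beyond the sentence preceding the proposition, which points out that $k=A/A_{\geq 1}$ is gradable and then defers to \cite{KMV} for the fact that $k$ classically generates $\sfD_\sg(\modu A)$ under the isolated singularity hypothesis, so that Lemma~\ref{lem_orbitequiv} applies. Your write-up simply unpacks that citation with the standard d\'evissage sketch; the only caveat is that, as you yourself flag, the ``peel off copies of $k$'' step in the non-Gorenstein, non-Cohen--Macaulay case is genuinely delicate and is better cited than sketched.
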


There are also other situations in which the lemma applies. The following elementary lemma covers some further cases of interest, for instance it applies to finite dimensional algebras with respect to a grading making the (ungraded) Jacobson radical homogeneous. This can be viewed as the obvious noncommutative generalization of the proposition in (geometric) dimension zero. 

\begin{lem}
If $A$ is finite dimensional and the simple modules are gradable then $\widetilde{F}$ is an equivalence.
\end{lem}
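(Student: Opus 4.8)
The plan is to apply Lemma~\ref{lem_orbitequiv}, so it suffices to exhibit a classical generating set for $\sfD_\sg(\modu A)$ consisting of gradable modules. Since $A$ is finite dimensional, the (ungraded) simple $A$-modules $S_1, \ldots, S_n$ — equivalently, the simple tops of the indecomposable projectives — form a natural candidate: every finitely generated $A$-module admits a finite filtration with simple subquotients, and passing to the derived category this shows that $\sfD^\bdd(\modu A) = \thick(S_1, \ldots, S_n)$. Taking the Verdier quotient by $\sfD^\perf(\modu A)$, the images of the $S_i$ generate $\sfD_\sg(\modu A)$ as a thick subcategory. By hypothesis each $S_i$ is gradable, so Lemma~\ref{lem_orbitequiv} applies directly.

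The one point requiring a little care is the claim that the simples classically generate the bounded derived category. First I would note that any $M \in \modu A$ has a composition series, so by devissage (repeatedly forming cones along $0 \to N' \to N \to N'' \to 0$) the stalk complex $M$ lies in $\thick(S_1, \ldots, S_n)$; then an arbitrary object of $\sfD^\bdd(\modu A)$ is built from finitely many shifts of its cohomology modules via the truncation triangles, hence also lies in this thick subcategory. I would then invoke the observation from Lemma~\ref{lem_orbitequiv}'s proof that the image of a classical generating set under a Verdier quotient is again a classical generating set. This gives that $\{F(S_i)\}$ — really the images in $\sfD_\sg(\modu A)$ — generate, and combined with full faithfulness of $\widetilde{F}$ and thickness of its image, surjectivity of $\widetilde{F}$ follows exactly as in the proof of Lemma~\ref{lem_orbitequiv}.

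I do not expect a serious obstacle here: the statement is essentially a corollary of Lemma~\ref{lem_orbitequiv}, and the only genuine content is the (standard) fact that finite-dimensional algebras have their bounded derived category classically generated by the simple modules. If anything, the subtle point is purely bookkeeping: one must make sure that "gradable" is used for the \emph{ungraded} simples and that the chosen grading on $A$ is the ambient one fixed in Section~\ref{sec_graded}, so that forgetting the grading really does send graded lifts of the $S_i$ to the $S_i$ themselves; granting that, the argument is immediate.
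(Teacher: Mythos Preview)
Your proposal is correct and follows essentially the same route as the paper: both argue that the simples classically generate $\sfD^\bdd(\modu A)$ via composition series, pass to the Verdier quotient to get a generating set for $\sfD_\sg(\modu A)$, and then invoke Lemma~\ref{lem_orbitequiv}. The paper's version is more terse, but there is no substantive difference in strategy or content.
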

\begin{proof}
Since every object of $\modu A$ has a finite composition series with semisimple subquotients the simples generate $\modu A$ under finite direct sums and extensions. It follows that the simples form a classical generating set for $\sfD^\bdd(\modu A)$. As the singularity category is a quotient of the bounded derived category their images in $\sfD_\sg(\modu A)$ are thus also a classical generating set, which lifts along $F$ by hypothesis.
\end{proof}

\subsection{Koszul duality}

Let us now recall a small piece of the theory of Koszul duality which will be used in one of our applications. Fix a field $k$, as above, and let $\Lambda$ be a left and right coherent connected graded $k$-algebra. 

\begin{defn}
The graded algebra $\Lambda$ is \emph{Koszul} if the minimal graded free resolution of the trivial module $k$ is linear. Put explicitly the requirement is that if
\begin{displaymath}
\xymatrix{
\cdots \ar[r] & F_i \ar[r] & F_{i-1}\ar[r] & \cdots \ar[r] & F_0
}
\end{displaymath}
is the minimal graded free resolution then $F_i$ is generated in degree $i$.

The \emph{Koszul dual} of $\Lambda$ is
\begin{displaymath}
\Lambda^! = \Ext^*_\Lambda(k,k)
\end{displaymath}
where the Ext-algebra is computed sans grading and $\Lambda^!$ is graded using cohomological degree.
\end{defn}

\begin{rem}\label{rem:Koszul1}
This definition can be extended beyond the connected case, cf.\ Remark~\ref{rem:Koszul2}.
\end{rem}

The facts we will need concerning Koszul duality are summarized in the following theorem; these facts are all standard, and we do not attempt to give exhaustive references. At this level of generality one can consult \cite{VillaSaorin}*{4.2 and 4.3} for further details. Really all that is needed for the statement we give is that $\Ext^*_\Lambda(k,k)$ is concentrated on the diagonal with respect to the bigrading by cohomological and internal degrees, see for example \cite{PPQA}*{Chapter~2.1}.

\begin{thm}\label{thm_KD}
Suppose $\Lambda$ is a Koszul algebra. Then $\Lambda^!$ is also Koszul and $\Lambda^{!!}\cong \Lambda$. If in addition $\Lambda$ is finite dimensional then $\Lambda^!$ has finite global dimension. Moreover, in this case the full subcategory
\begin{displaymath}
\sfT = \{\Sigma^{i}k(-i) \; \vert \; i\in \ZZ\}
\end{displaymath}
of $\sfD^\mathrm{b}(\gr \Lambda)$ is tilting. It induces an equivalence of triangulated categories
\begin{displaymath}
\phi = \RHom(\sfT,-) \colon \sfD^\mathrm{b}(\gr \Lambda) \to \sfD^\mathrm{perf}(\gr \Lambda^!)
\end{displaymath}
such that
\begin{itemize}
\item $\phi\circ (1) \cong \Sigma(-1) \circ \phi$;
\item $\phi$ sends perfect complexes to complexes with torsion cohomology.
\end{itemize}
In particular, $\phi$ restricts to an equivalence
\begin{displaymath}
\sfD^\mathrm{perf}(\gr \Lambda) \to \sfD^\mathrm{perf}_\mathrm{tors}(\gr \Lambda^!),
\end{displaymath}
and so induces an equivalence
\begin{displaymath}
\sfD_\mathrm{sg}(\gr \Lambda) \to \sfD^\mathrm{b}(\qgr \Lambda^!) = \sfD^\mathrm{perf}(\gr \Lambda^!) / \sfD^\mathrm{perf}_\mathrm{tors}(\gr \Lambda^!).
\end{displaymath}
\end{thm}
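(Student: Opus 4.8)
The plan is to assemble the statement from standard Koszul duality theory, the single essential input being the fact, recalled in \cite{PPQA}*{Chapter~2.1}, that for a Koszul algebra $\Lambda$ the bigraded algebra $\Ext^*_\Lambda(k,k)$ is \emph{concentrated on the diagonal}: the internal degree of $\Ext^n_\Lambda(k,k)$ equals $n$. The involutivity assertions ($\Lambda^!$ is Koszul and $\Lambda^{!!}\cong\Lambda$) and the statement on global dimension are then routine: the (linear, hence ``Koszul'') minimal graded free resolution of $k$ over $\Lambda$ is $\Lambda\otimes_k(\Lambda^!)^\vee$ with the transpose of multiplication as differential, from which one reads off that $\Lambda^!$ is Koszul with $(\Lambda^!)^!\cong\Lambda$; and when $\Lambda$ is finite dimensional the corresponding resolution of $k$ over $\Lambda^!$ — governed by $(\Lambda^!)^!\cong\Lambda$ — is finite, so $\pdim_{\Lambda^!}k<\infty$ and hence $\gldim\Lambda^!<\infty$, the residue field detecting global dimension over a connected graded algebra. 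See \cite{VillaSaorin}*{4.2 and 4.3} for all of this.

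For the tilting statement I would first compute, for $i,j,m\in\ZZ$,
\[
\Hom_{\sfD^{\mathrm{b}}(\gr\Lambda)}\big(\Sigma^i k(-i),\, \Sigma^j k(-j)[m]\big)\;\cong\;\Ext^{m+j-i}_{\gr\Lambda}\big(k,\,k(i-j)\big),
\]
which by diagonal concentration vanishes unless $m=0$, in which case it is $\Lambda^!_{j-i}$. Thus $\sfT$ has vanishing ``negative and positive'' self-morphisms; its DG endomorphism category has cohomology $\Lambda^!$ (graded by cohomological degree) concentrated in degree $0$, hence is formal, so is identified with $\Lambda^!$. Since $\Lambda$ is finite dimensional and connected graded, $k$ is its only graded simple module, so the simples $\{k(j)\mid j\in\ZZ\}$ — equivalently the objects of $\sfT$ up to shift — classically generate $\sfD^{\mathrm{b}}(\gr\Lambda)$. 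Hence $\sfT$ is a tilting subcategory and the standard tilting theorem (Keller) gives the equivalence $\phi=\RHom(\sfT,-)\colon\sfD^{\mathrm{b}}(\gr\Lambda)\xrightarrow{\ \sim\ }\sfD^{\mathrm{perf}}(\gr\Lambda^!)$, the target being $\sfD^{\mathrm{perf}}(\gr\Lambda^!)$ rather than a larger derived category exactly because the objects of $\sfT$ are compact.

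The relation $\phi\circ(1)\cong\Sigma(-1)\circ\phi$ is bookkeeping: under $\phi$ the autoequivalence $(1)$ of $\sfD^{\mathrm{b}}(\gr\Lambda)$ corresponds to the reindexing $i\mapsto i+1$ of the objects of $\sfT$, and as the $i$-th object is $\Sigma^i k(-i)$ this reindexing is a cohomological shift composed with an internal grading shift on the $\Lambda^!$-module side. For $\phi\big(\sfD^{\mathrm{perf}}(\gr\Lambda)\big)\subseteq\sfD^{\mathrm{perf}}_{\mathrm{tors}}(\gr\Lambda^!)$, note $\sfD^{\mathrm{perf}}(\gr\Lambda)=\thick\{\Lambda(j)\mid j\in\ZZ\}$, so it is enough to treat $\phi(\Lambda(j))$; at the $i$-th object of $\sfT$ its cohomology is (a subquotient of) $\Ext^*_{\gr\Lambda}(k,\Lambda(i+j))$, which vanishes once $i$ is large because $\Lambda(i+j)$ then has no component in the (finite, since $\Lambda$ is finite dimensional) interval of internal degrees supporting $\Lambda$. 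So $\phi(\Lambda(j))$ is bounded above in the grading, and since $\Lambda^!$ is positively graded its cohomology is torsion.

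Finally, $\phi$ restricts to an equivalence $\sfD^{\mathrm{perf}}(\gr\Lambda)\xrightarrow{\ \sim\ }\sfD^{\mathrm{perf}}_{\mathrm{tors}}(\gr\Lambda^!)$: full faithfulness is inherited, and essential surjectivity follows because $\sfD^{\mathrm{perf}}_{\mathrm{tors}}(\gr\Lambda^!)$ is generated as a thick subcategory by the finite-length modules $k(j)$, each perfect (as $\gldim\Lambda^!<\infty$) and, by the ``Koszul self-duality'' of the situation, in the image of $\phi$. Passing to Verdier quotients yields $\sfD_{\mathrm{sg}}(\gr\Lambda)\xrightarrow{\ \sim\ }\sfD^{\mathrm{perf}}(\gr\Lambda^!)/\sfD^{\mathrm{perf}}_{\mathrm{tors}}(\gr\Lambda^!)$, and this last category is identified with $\sfD^{\mathrm{b}}(\qgr\Lambda^!)$ using $\gldim\Lambda^!<\infty$ (so $\sfD^{\mathrm{perf}}(\gr\Lambda^!)=\sfD^{\mathrm{b}}(\gr\Lambda^!)$) together with the standard fact that the Serre quotient $\qgr\Lambda^!=\gr\Lambda^!/(\text{torsion})$ lifts to $\sfD^{\mathrm{b}}(\qgr\Lambda^!)\cong\sfD^{\mathrm{b}}(\gr\Lambda^!)/\sfD^{\mathrm{b}}_{\mathrm{tors}}(\gr\Lambda^!)$. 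The main obstacle is the torsion claim together with its consequence: disentangling the two gradings and the shifts $\Sigma^i$ is precisely where finite dimensionality of $\Lambda$ enters, and it is the step needing genuine care; the final Verdier-quotient identification also quietly requires $\Lambda^!$ to be noetherian (or at least coherent), which holds in the cases of interest.
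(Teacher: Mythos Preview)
Your proposal is correct and follows precisely the approach the paper indicates: the paper does not give a proof of this theorem at all, but merely states it as a summary of standard facts, citing \cite{VillaSaorin}*{4.2 and 4.3} and noting that the essential input is the diagonal concentration of $\Ext^*_\Lambda(k,k)$ from \cite{PPQA}*{Chapter~2.1}. You have taken exactly these two ingredients and filled in the details the paper omits, so there is nothing to compare---you have simply supplied the argument the authors chose to leave to the literature.
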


\begin{rem}
In our applications $\Lambda^!$ will be coherent and so $\gr \Lambda^!$ is an abelian category and one can identify $\sfD^\mathrm{perf}(\gr \Lambda^!)$ with $\sfD^\mathrm{b}(\gr \Lambda^!)$. Moreover, $\sfD^\mathrm{b}(\qgr \Lambda^!)$ is then the bounded derived category of $\qgr \Lambda^! = \gr \Lambda^!/\tors \Lambda^!$, where $\tors\Lambda^!$ is the full subcategory of finitely presented torsion modules.
\end{rem}

An example of particular relevance is when $\Lambda$ is $\bigwedge(k(-1)^{n+1})$, an exterior algebra on $n+1$ degree $1$ generators. This algebra is certainly finite dimensional and is also Koszul, with Koszul dual $\mathrm{S}(k(-1)^{n+1})$ the symmetric algebra on $n+1$ degree $1$ generators. In this situation the theorem gives the classical BGG correspondence \cite{BGG}
\begin{displaymath}
\sgr \Lambda \cong \sfD^\mathrm{b}(\coh \mathbb{P}^{n}_k)
\end{displaymath}
sending the autoequivalence $(1)$ on the left to the autoequivalence $\Sigma \str(-1)\otimes \text{-}$ on the right.

\begin{rem}
The equivalence $\phi$ of the theorem can be, by construction, lifted to a quasi-equivalence of DG-categories. Thus the induced equivalences, on the perfect complexes and singularity categories, inherit compatible DG-categorical lifts. 
\end{rem}

\begin{rem}
If $\Lambda$ is a finite dimensional graded Frobenius algebra then $\sgr \Lambda$ and $\smodu \Lambda$ are endowed with canonical DG-enhancements by virtue of being stable categories of Frobenius categories. Indeed, they can be viewed as homotopy categories of acylic complexes of projective-injective objects. These enhancements are quasi-equivalent to those induced via localization from the pertinent bounded derived category. We are not aware if this is recorded in the literature explicitly, but it can be handily deduced from the results of \cite{KrauseStable}.
\end{rem}



\section{The main results}
We are now in a position to indicate how one can compute $\mathbb{A}^1$-homotopy invariants of singularity categories in the presence of a favourable grading.

\subsection{A general statement}

We first give the obvious statement one gets from the given ingredients.

\begin{thm}\label{thm_main1}
Let $A$ be a finitely generated noetherian graded $k$-algebra such that there is a classical generating set of gradable modules for $\sfD_\sg(\modu A)$. Then for any $\mathbb{A}^1$-homotopy invariant $\a1$ there is an isomorphism
\begin{displaymath}
\a1(\sfD_\sg(\modu A)) \cong \a1(\sfD_\sg(\gr A)/(1)),
\end{displaymath}
where $(1)$ denotes the grading shift autoequivalence on $\sfD_\sg(\gr A)$, which induces a triangle
\begin{displaymath}
\xymatrix{
\a1(\sfD_\sg(\gr A)) \ar[rr]^-{\a1(1) - \id} && \a1(\sfD_\sg(\gr A)) \ar[r] & \a1(\sfD_\sg(\modu A)) \ar[r] & \Sigma \a1(\sfD_\sg(\gr A)).
}
\end{displaymath}
\end{thm}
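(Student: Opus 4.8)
The plan is to assemble the statement from the three ingredients recalled in Section~\ref{sec:prelims}, in the following order.

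First I would establish the isomorphism $\a1(\sfD_\sg(\modu A)) \cong \a1(\sfD_\sg(\gr A)/(1))$. By Lemma~\ref{lem_orbitequiv}, the hypothesis that $\sfD_\sg(\modu A)$ admits a classical generating set of gradable modules ensures that the comparison functor $\widetilde{F}\colon \sfD_\sg(\gr A)/(1) \to \sfD_\sg(\modu A)$ is an equivalence. Strictly speaking, as flagged in Convention~\ref{conv:horror} and the discussion preceding Lemma~\ref{lem_orbitequiv}, one works at the level of DG-enhancements: $\sfD_\sg(\gr A)$ and $\sfD_\sg(\modu A)$ are algebraic, hence have canonical DG-enhancements, the grading shift $(1)$ lifts to a DG-quasi-equivalence of the enhancement of $\sfD_\sg(\gr A)$ to itself, and $\widetilde{F}$ lifts to a DG-functor out of the DG-orbit category (after passing to $\Perf$, i.e.\ idempotent completing). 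The equivalence of Lemma~\ref{lem_orbitequiv} is then an equivalence of the pretriangulated hulls, hence a derived Morita equivalence of the enhancements, so property (1) of an $\AA^1$-homotopy invariant gives the displayed isomorphism.

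Next I would apply Theorem~\ref{thm:tabuada} (Tabuada) to the DG-enhancement $\sfC$ of $\sfD_\sg(\gr A)$ equipped with the quasi-equivalence $F = (1)$ lifting the grading shift. This directly produces the distinguished triangle
\begin{displaymath}
\xymatrix{
\a1(\sfC) \ar[rr]^-{\a1(1) - \id} && \a1(\sfC) \ar[r]^-{\a1(\pi)} & \a1(\sfC/(1)) \ar[r] & \Sigma \a1(\sfC).
}
\end{displaymath}
Identifying $\a1(\sfC)$ with $\a1(\sfD_\sg(\gr A))$ (abuse of notation: an $\AA^1$-homotopy invariant is defined on DG-categories and we write it on the associated triangulated category, which is harmless since it inverts derived Morita equivalences) and substituting the isomorphism $\a1(\sfC/(1)) \cong \a1(\sfD_\sg(\modu A))$ from the previous step yields exactly the claimed triangle.

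I do not anticipate a serious obstacle: the result is genuinely a formal consequence of the cited statements. The one point requiring care is the bookkeeping of enhancements and idempotent completions — making sure that the equivalence $\widetilde{F}$ of Lemma~\ref{lem_orbitequiv} and the orbit construction underlying Theorem~\ref{thm:tabuada} refer to the same DG-model of $\sfD_\sg(\gr A)/(1)$, and that the grading shift autoequivalence is realized by a strict DG-functor (or at least a quasi-equivalence) on that model so that Theorem~\ref{thm:tabuada} applies. This is exactly the content of Convention~\ref{conv:horror} and the remarks in Section~\ref{sec_graded}, so once those are invoked the proof is a two-line concatenation. I would therefore keep the written proof short: cite Lemma~\ref{lem_orbitequiv} for the equivalence, note the passage to enhancements, and cite Theorem~\ref{thm:tabuada} for the triangle.
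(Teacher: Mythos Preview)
Your proposal is correct and matches the paper's proof essentially line for line: invoke Lemma~\ref{lem_orbitequiv} (under Convention~\ref{conv:horror}) to get the equivalence $\sfD_\sg(\gr A)/(1) \cong \sfD_\sg(\modu A)$, then apply Theorem~\ref{thm:tabuada} to obtain the triangle. The only difference is that you spell out the enhancement bookkeeping more explicitly than the paper does, which is fine.
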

\begin{proof}
By the hypotheses on $A$ we see from Lemma~\ref{lem_orbitequiv} that there is an equivalence
\begin{displaymath}
\sfD_\sg(\gr A)/(1) \cong \sfD_\sg(\modu A)
\end{displaymath}
(recall that we are identifying the orbit category with its pretriangulated hull, see Convention~\ref{conv:horror}). The first statement of the theorem is immediate from this. The existence of the claimed cofibre sequence then follows from Tabuada's result Theorem~\ref{thm:tabuada}.
\end{proof}

We now specialize to finite dimensional Koszul algebras. This case is particularly nice as Koszul duality allows one to rephrase the computation in terms of orbit categories arising from noncommutative projective algebraic geometry. 


\begin{cor}\label{cor:Koszul}
Let $\Lambda$ be a finite dimensional $k$-algebra equipped with a Koszul grading and denote by $\Lambda^!$ its Koszul dual. Then for any $\mathbb{A}^1$-homotopy invariant $\a1$ there is an isomorphism
\begin{displaymath}
\a1(\sfD_\sg(\gr \Lambda)) \cong \a1(\sfD^\mathrm{perf}(\qgr \Lambda^!))
\end{displaymath}
which induces a triangle
\begin{displaymath}
\xymatrix{
\a1(\sfD^\mathrm{perf}(\qgr \Lambda^!)) \ar[rrr]^-{\a1(\Sigma\str(-1)\otimes \text{-}) - \id} &&& \a1(\sfD^\mathrm{perf}(\qgr \Lambda^!)) \ar[r] & \a1(\sfD_\sg(\modu \Lambda)) \ar[r] &. 
}
\end{displaymath}
\end{cor}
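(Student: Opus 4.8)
The plan is to combine Theorem~\ref{thm_main1} (specifically the version already applying to $\Lambda$, whose hypotheses we must check), Corollary~\ref{cor:Koszul}'s own first isomorphism, and the transport-of-structure afforded by the Koszul-duality equivalence $\phi$ of Theorem~\ref{thm_KD}. Concretely, the proof proceeds in three stages: (i) identify $\sfD_\sg(\gr\Lambda)$ with $\sfD^{\mathrm{perf}}(\qgr\Lambda^!)$ via $\phi$, matching the autoequivalence $(1)$ with $\Sigma\str(-1)\otimes\text{-}$; (ii) feed this identification into the triangle of Theorem~\ref{thm_main1} applied to $A=\Lambda$; (iii) check that the hypotheses of Theorem~\ref{thm_main1} hold here, i.e.\ that $\sfD_\sg(\modu\Lambda)$ has a classical generating set of gradable modules.

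First I would invoke Theorem~\ref{thm_KD}: since $\Lambda$ carries a Koszul grading and is finite dimensional, $\phi = \RHom(\sfT,-)$ induces an equivalence $\sfD_\sg(\gr\Lambda) \xrightarrow{\sim} \sfD^{\bdd}(\qgr\Lambda^!)$, and under the remark following that theorem (using that $\Lambda^!$ is coherent in our situation, as it will be whenever we apply the corollary) we may write the target as $\sfD^{\mathrm{perf}}(\qgr\Lambda^!)$. The theorem further records $\phi\circ(1)\cong\Sigma(-1)\circ\phi$, where on the $\qgr\Lambda^!$ side $(-1)$ is the Serre twist, so the autoequivalence $(1)$ on $\sfD_\sg(\gr\Lambda)$ is intertwined by $\phi$ with $\Sigma\str(-1)\otimes\text{-}$ on $\sfD^{\mathrm{perf}}(\qgr\Lambda^!)$. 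Applying the $\AA^1$-homotopy invariant $\a1$ — which, being a functor on $\dgcatk$ inverting derived Morita equivalences, carries the DG-lift of $\phi$ to an isomorphism — gives the asserted isomorphism $\a1(\sfD_\sg(\gr\Lambda))\cong\a1(\sfD^{\mathrm{perf}}(\qgr\Lambda^!))$ together with the commuting square identifying $\a1(1)-\id$ with $\a1(\Sigma\str(-1)\otimes\text{-})-\id$.

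Next I would verify the hypothesis of Theorem~\ref{thm_main1}. A finite dimensional $\Lambda$ with a Koszul grading is in particular connected graded, hence graded local, so by the last Lemma of Section~\ref{sec_graded} — the simple modules are gradable because the grading makes the Jacobson radical homogeneous — $\widetilde F$ is an equivalence $\sfD_\sg(\gr\Lambda)/(1)\xrightarrow{\sim}\sfD_\sg(\modu\Lambda)$, so Theorem~\ref{thm_main1} applies verbatim to $A=\Lambda$ and yields the triangle
\begin{displaymath}
\xymatrix{
\a1(\sfD_\sg(\gr\Lambda)) \ar[rr]^-{\a1(1)-\id} && \a1(\sfD_\sg(\gr\Lambda)) \ar[r] & \a1(\sfD_\sg(\modu\Lambda)) \ar[r] & \Sigma\a1(\sfD_\sg(\gr\Lambda)).
}
\end{displaymath}
Transporting the first two terms along the isomorphism from the previous paragraph, and using that the square commutes so that the first map becomes $\a1(\Sigma\str(-1)\otimes\text{-})-\id$, produces exactly the triangle in the statement.

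The main obstacle I anticipate is purely bookkeeping at the DG level: Theorem~\ref{thm:tabuada} and hence Theorem~\ref{thm_main1} are statements about DG-functors $F\colon\sfC\to\sfC$, whereas $\phi$ and the grading shift are a priori triangulated equivalences/autoequivalences, so one must be slightly careful that the DG-enhancements (which exist by algebraicity, as noted at the end of Section~\ref{sec_graded}) can be chosen compatibly — i.e.\ that $\phi$ lifts to a DG-quasi-equivalence intertwining a DG-lift of $(1)$ with a DG-lift of $\Sigma\str(-1)\otimes\text{-}$ up to coherent homotopy, so that the resulting square of $\a1$-values genuinely commutes. This is routine in the algebraic setting and implicitly handled by the uniqueness of enhancements, but it is the one point where the argument is not a pure formality; everything else is a direct concatenation of the cited results.
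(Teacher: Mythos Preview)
Your proposal is correct and follows essentially the same route as the paper: verify that $\Lambda$ being connected graded makes the unique simple gradable so that Theorem~\ref{thm_main1} applies, then transport the resulting triangle along the Koszul-duality equivalence of Theorem~\ref{thm_KD}, which intertwines $(1)$ with $\Sigma\str(-1)\otimes\text{-}$. The only differences are cosmetic (you reverse the order of the two steps and add an explicit remark about DG-enhancement compatibility that the paper leaves implicit).
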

\begin{proof}
Since $\Lambda$ admits a connected grading it is local as a plain algebra. The unique simple $k$ is certainly gradable and so Theorem~\ref{thm_main1} applies to give us a cofibre sequence
\begin{displaymath}
\xymatrix{
\a1(\sfD_\sg(\gr \Lambda)) \ar[rr]^-{\a1(1) - \id} && \a1(\sfD_\sg(\gr \Lambda)) \ar[r] & \a1(\sfD_\sg(\modu \Lambda)) \ar[r] & \Sigma \a1(\sfD_\sg(\gr \Lambda)).
}
\end{displaymath}
By Theorem~\ref{thm_KD} there is an equivalence $\sfD_\sg(\gr \Lambda) \cong \sfD^\mathrm{perf}(\qgr \Lambda^!)$ which identifies $(1)$ on the former category with $\Sigma\str(-1)\otimes \text{-}$ on the latter. Rewriting the cofibre sequence above using these identifications gives the cofibre sequence claimed in the statement.
\end{proof}

\begin{rem}\label{rem:Koszul2}
One can consider a more general notion of Koszul where instead of working over $k$ with connected algebras one works with algebras augmented over more general semisimple bases. There is, of course, an analogue in this generality and all the proofs go through unchanged. 
\end{rem}

\subsection{Gorenstein algebras}

In order to use these results to effectively compute invariants one needs a handle on the graded singularity category. Fortunately, introducing a grading has the effect of adding more simples and splitting up the Exts. As a result, there are frequently semi-orthogonal decompositions at the graded level which one can exploit to perform computations, cf.\ Remark~\ref{rem:SOD}. 

The situation is particularly good for certain finite dimensional algebras.

\begin{defn}
Following \cite{BurkeStevenson}, we say a graded $k$-algebra $\Lambda$ is \emph{Artin-Schelter Gorenstein} if $\Lambda$ has finite injective dimension as both a left and a right $\Lambda$-module and
\begin{displaymath}
\RuHom(\Lambda_0,\Lambda) \cong \Sigma^d\Lambda_0(a)
\end{displaymath}
for some integers $d$ and $a$, where $\RuHom$ is the right derived functor of the graded hom-functor. We call the $a$ appearing above the \emph{Gorenstein parameter} of $\Lambda$.
\end{defn}

\begin{rem}
If $\Lambda$ is a finite dimensional graded $k$-algebra with $\Lambda_0 = \Lambda/\rad(\Lambda)$ then $\Lambda$ being Artin-Schelter Gorenstein implies that $\Lambda$ is self-injective. In particular, there is a canonical equivalence $\sfD_\sg(\gr \Lambda) \cong \sgr \Lambda$.

One can sometimes get away with asking less of $\Lambda$; by \cite{Yamaura} the stable category of graded modules over any non-negatively graded self-injective algebra $\Lambda$ with $\Lambda_0$ of finite global dimension has a tilting object. This can be used to run a similar argument, at least in some cases, to the one given below. However, we do not treat this case explicitly.
\end{rem}

\begin{cor}\label{cor:Gor}
Let $\Lambda$ be a finite dimensional basic Artin-Schelter Gorenstein $k$-algebra with $\Lambda_0 = \Lambda/\rad(\Lambda)$, where $\rad(\Lambda)$ denotes the ungraded radical, and with all simples $1$-dimensional. Then for any $\mathbb{A}^1$-homotopy invariant $\a1$ there is an isomorphism
\begin{displaymath}
\a1(\sfD_\sg(\gr \Lambda)) \cong \a1(k)^{\oplus n|a|}
\end{displaymath}
where $n$ is the number of simples and $a\leq0$ is the Gorenstein parameter of $\Lambda$. In particular, there is a cofibre sequence
\begin{displaymath}
\xymatrix{
\a1(k)^{\oplus n|a|} \ar[r]^-\phi & \a1(k)^{\oplus n|a|} \ar[r] & \a1(\sfD_\sg(\modu \Lambda)) \ar[r] &
}
\end{displaymath}
where $\phi$ can be written in the form
\begin{displaymath}
\phi = 
\begin{pmatrix}
-1 & 0 & \cdots & 0 & \phi_{0,a+1} \\
1 & -1 & \cdots & 0 & \phi_{-1,a+1} \\
0 & 1 & \cdots & 0 & \phi_{-2,a+1} \\
\vdots & \vdots & \cdots & \vdots & \vdots \\
0 & 0 & \cdots & -1 & \phi_{a+2, a+1} \\
0 & 0 & \cdots & 1 & \phi_{a+1,a+1} -1
\end{pmatrix}
\in \End(\a1(k)^{\oplus n|a|}).
\end{displaymath}
\end{cor}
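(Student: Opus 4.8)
The plan is to combine Corollary~\ref{cor:Koszul}-type reasoning with an explicit semi-orthogonal decomposition of the graded singularity category. First I would observe that since $\Lambda$ is finite dimensional, basic, Artin--Schelter Gorenstein with $\Lambda_0 = \Lambda/\rad(\Lambda)$, it is self-injective and hence $\sfD_\sg(\gr \Lambda) \cong \sgr \Lambda$. The key input is a tilting object for $\sgr \Lambda$ whose indecomposable summands can be organised into an exceptional collection of length $n|a|$; this is where the hypothesis that the Gorenstein parameter $a$ is negative (equivalently, that $\Lambda$ is not semisimple) and the bookkeeping of the grading shift enter. Concretely, the simple graded modules $k_i(j)$ for $i = 1, \dots, n$ and $j$ ranging over $|a|$ consecutive integers (say $0, -1, \dots, a+1$) give, after suitable truncation against the perfect complexes, a full strong exceptional collection in $H^0\Perf(\sfD_\sg(\gr \Lambda))$. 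By Remark~\ref{rem:SOD} this immediately yields $\a1(\sfD_\sg(\gr \Lambda)) \cong \a1(k)^{\oplus n|a|}$, which is the first assertion.

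Next I would feed this identification into Theorem~\ref{thm_main1} (applicable since $\Lambda$ is connected graded, hence local, so the unique-up-to-shift simple is gradable). That produces the cofibre sequence
\begin{displaymath}
\xymatrix{
\a1(k)^{\oplus n|a|} \ar[r]^-{\a1(1) - \id} & \a1(k)^{\oplus n|a|} \ar[r] & \a1(\sfD_\sg(\modu \Lambda)) \ar[r] &,
}
\end{displaymath}
so it only remains to compute the matrix of $\a1(1) - \id$ in the chosen basis, i.e.\ to understand the action of the grading shift $(1)$ on the exceptional collection. The shift $(1)$ sends the summand indexed by $(i,j)$ to the one indexed by $(i, j+1)$, except that the top slot $j = 0$ is pushed out of the chosen window and must be re-expressed, via the triangulated structure of $\sgr \Lambda$, in terms of the other summands; the coefficients of that re-expression are precisely the entries $\phi_{\bullet, a+1}$ in the last column. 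This gives $\phi$ the displayed form: a shift (sub-diagonal $1$'s, diagonal $-1$'s) plus a correction supported in one column, the $-1$ on the diagonal and the $\phi_{a+1,a+1}-1$ in the corner coming from subtracting the identity.

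The main obstacle is the last step: identifying the effect of $(1)$ on the boundary summand of the exceptional collection, i.e.\ computing how $k(1)$ (or its appropriate shift) decomposes in $K_0$ of $\sgr \Lambda$ relative to the tilting summands. This requires knowing the minimal graded projective resolution of $k$ over $\Lambda$ — equivalently the structure of $\Lambda^!$ — well enough to see that $(1)$ applied to the last surviving simple lands, up to the perfect complexes, in the span of the collection with the stated coefficients. For the self-injective AS-Gorenstein case the Gorenstein parameter $a$ controls exactly the length of this resolution tail, which is why the window has width $|a|$ and why the non-trivial column appears in position $a+1$; making this precise, and checking that no further entries outside the displayed pattern occur, is the crux. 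The remaining verification — that $\a1(1)-\id$ has the block form above once the shift is understood — is then routine linear bookkeeping.
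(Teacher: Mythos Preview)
Your approach is essentially the paper's: obtain a full exceptional collection in $\sfD_\sg(\gr \Lambda)$ built from shifts of $\Lambda_0$, deduce $\a1(\sfD_\sg(\gr\Lambda))\cong \a1(k)^{\oplus n|a|}$ via Remark~\ref{rem:SOD}, feed this into Theorem~\ref{thm_main1}, and read off $\phi$ by tracking how the grading shift acts on the collection. The paper just cites the semi-orthogonal decomposition $(\Lambda_0(0),\ldots,\Lambda_0(a+1))$ directly from \cite{BurkeStevenson}*{Theorem~6.4} (extending Orlov) rather than constructing it by hand.

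Two small corrections. First, $\Lambda$ is not connected or local in general: the hypothesis $\Lambda_0=\Lambda/\rad(\Lambda)\cong k^n$ allows $n>1$ simples. Theorem~\ref{thm_main1} applies not because there is a unique simple but because this hypothesis forces all $n$ simples to sit in degree~$0$, hence to be gradable. Second, to land on the displayed matrix the paper takes orbits by $(-1)$ rather than $(1)$: then each $\Lambda_0(j)$ moves to $\Lambda_0(j-1)$, producing the sub-diagonal $1$'s, and only $\Lambda_0(a+1)\mapsto\Lambda_0(a)$ leaves the window, giving the last column. That column simply records the class of $\Lambda_0(a)$ in $K_0$ relative to the decomposition (cf.\ Remark~\ref{rem:approximation}); no appeal to $\Lambda^!$ or to an explicit projective resolution is needed here, and the entries $\phi_{i,a+1}$ are deliberately left unspecified at this stage.
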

\begin{proof}
By \cite{BurkeStevenson}*{Theorem~6.4} (which extends \cite{Orlov09}*{Corollary~2.9}) the graded singularity category has a semi-orthogonal decomposition
\begin{displaymath}
\sfD_\sg(\gr \Lambda) = (\Lambda_0(0), \ldots, \Lambda_0(a+1))
\end{displaymath}
where $a$ is the Gorenstein parameter (which is negative in this case provided the singularity category isn't trivial, so the sequence has length $a$). Since $\Lambda_0 \cong k^n$ is a semisimple algebra and $\AA^1$-homotopy invariants are functors inverting derived Morita equivalences, the isomorphism $\a1(\sfD_\sg(\gr \Lambda)) \cong \a1(k)^{\oplus n|a|}$ is a formal consequence, as noted in Remark \ref{rem:SOD}.

Since $\Lambda_0 = \Lambda/\rad(\Lambda)$ all the simples are gradable and so Theorem~\ref{thm_main1} applies to give a cofibre sequence
\begin{displaymath}
\xymatrix{
\a1(\sfD_\sg(\gr \Lambda)) \ar[rrr]^-{\a1(-1) - \id} &&& \a1(\sfD_\sg(\gr \Lambda)) \ar[r] & \a1(\sfD_\sg(\modu \Lambda)) \ar[r] & 
}
\end{displaymath}
where we have taken orbits by $(-1)$ instead of $(1)$ for convenience (which makes no difference). Using the isomorphism $\a1(\sfD_\sg(\gr \Lambda)) \cong \a1(k)^{\oplus n|a|}$ gives us a cofibre sequence of the claimed form up to verifying the description of $\phi = \a1(-1) - \id$. This description follows from noting that $(-1)$ just translates the chosen exceptional collection, except for $\Lambda_0(a+1)\mapsto \Lambda_0(a)$ which is no longer part of the collection. The final column expresses the class of $\Lambda_0(a)$ with respect to the decomposition of the Grothendieck group given by the semiorthogonal decomposition, which together with the above describes the corresponding map on $K_0$ and so, by \cite{TabuadaA1}*{Proposition~2.8}, completely determines the map $\phi$; see Remark~\ref{rem:approximation} for further explanation and intuition.
\end{proof}

\begin{rem}\label{rem:approximation}
Suppose for simplicity that $\Lambda_0 \cong k$, i.e.\ $\Lambda$ is local. The $\phi_{i,a+1}$ occurring in the final column of $\phi$ express the multiplicities occurring in the sequence of approximation triangles
\begin{displaymath}
\xymatrix{
k(a) \ar[r] & k(a)_{a+2} \ar[d] \ar[r] & \cdots \ar[r] & k(a)_{-1} \ar[r] \ar[d] & k(a)_0 \ar[r] \ar[d] & 0 \ar[d] \\
& X_{a+1} \ar[ul]^-\Sigma & & X_{-2} & X_{-1} \ar[ul]^-\Sigma & X_0 \ar[ul]^-\Sigma
}
\end{displaymath}
for $k(a)$ with respect to the full exceptional collection $(k(0), \ldots, k(a+1))$; in this diagram we have $X_i \in \thick(k(i))$, each of the triangles is distinguished, and $k(a)_{i}$ lies in $(k(0),\ldots, k(i))$. More precisely
\begin{displaymath}
\phi_{i,a+1} = [X_i] \in K_0(\thick(k(i))) \cong \ZZ,
\end{displaymath}
and these multiplicities describe $[k(a)]$ with respect to the basis of $K_0(\sfD_\sg(\gr \Lambda))$ given by $\{[k(0)], \ldots, [k(a+1)]\}$. Thus, we know the representation of the automorphism $K_0(-1)$ of $K_0(\sfD_\sg(\gr \Lambda))$ in terms of this basis and from \cite{TabuadaA1}*{Proposition~2.8} we learn that this same integer matrix describes $\a1(-1)$ with respect to the corresponding decomposition of $\a1(\sfD_\sg(\gr \Lambda))$.
\end{rem}

It is possible to give a sufficiently explicit description of the matrix $\phi$ in Corollary~\ref{cor:Gor} that one can actually compute in examples. The remainder of this section is devoted to providing this description. We start with an example, namely exterior algebras, illustrating how things work and how one can proceed with computations. We then explain, in Theorem~\ref{thm:Frobenius}, how the story presented in the example generalizes to any finite dimensional Artin-Schelter Gorenstein algebra.

\subsection{Exterior algebras}

Let $\Lambda$ be an exterior algebra on $n+1$ generators in degree $1$. This algebra is Koszul with dual $\Lambda^!$ polynomial on $n+1$ degree $1$ generators. We are then in the situation of Corollary~\ref{cor:Koszul} (and of Corollary~\ref{cor:Gor}): the BGG correspondence gives 
\begin{displaymath}
\sgr \Lambda \cong \sfD^\mathrm{b}(\coh \mathbb{P}^{n})
\end{displaymath}
and we can exploit our knowledge of projective space. Given an $\AA^1$-homotopy invariant $\a1$ we can use the triangle
\begin{displaymath}
\xymatrix{
\a1(\sfD^\mathrm{b}(\coh \PP^n)) \ar[rrr]^-{\a1(\Sigma^{-1}\str(1)\otimes \text{-}) - \id} &&& \a1(\sfD^\mathrm{b}(\coh \PP^n)) \ar[r] & \a1(\smodu \Lambda) \ar[r] & 
}
\end{displaymath}
to attempt to compute $\a1(\smodu \Lambda)$ (again we have used the inverse of the functor taken in the corollaries for the sake of convenience).

The Beilinson full exceptional collection
\begin{displaymath}
\sfD^\mathrm{b}(\coh \PP^n) = (\str(-n), \str(-n+1), \ldots, \str(-1), \str)
\end{displaymath}
implies that for any $\AA^1$-homotopy invariant $\a1$ we have
\begin{displaymath}
\a1(\sfD^\mathrm{b}(\coh \PP^n)) \cong \a1(k)^{\oplus n+1}
\end{displaymath}
and so we can rewrite our cofibre sequence as 
\begin{displaymath}
\xymatrix{
\a1(k)^{\oplus n+1} \ar[r]^-\phi & \a1(k)^{\oplus n+1} \ar[r] & \a1(\smodu \Lambda) \ar[r] &
}
\end{displaymath}
and the game is to understand $\phi$ (as in Corollary~\ref{cor:Gor} which would have gotten us to the same place, noting that the Gorenstein parameter of $\Lambda$ is $-n-1$) which is the morphism $\a1(\Sigma^{-1}\str(1)\otimes\text{-})-\id$ written with respect to the system of coordinates given by the chosen full exceptional collection. We more or less understand $\phi$ in the sense that we can write it as
\begin{displaymath}
\phi = 
\begin{pmatrix}
0 & 0 & \cdots & 0 & \psi_{-n} \\
\a1(\Sigma^{-1}) & 0 & \cdots & 0 & \psi_{-n+1} \\
0 & \a1(\Sigma^{-1}) & \cdots & 0 & \psi_{-n+2} \\
\vdots & \vdots & \cdots & \vdots & \vdots \\
0 & 0 & \cdots & 0 & \psi_{-1} \\
0 & 0 & \cdots & \a1(\Sigma^{-1}) & \psi_{0} 
\end{pmatrix}
- \mathrm{Id}_{n+1},
\end{displaymath}
the main `difficulty' being the computation of the $\psi_i$ which are the multiplicities for $\Sigma^{-1}\str(1)$. Indeed we know, by additivity for $\AA^1$-homotopy invariants \cite{TabuadaA1}*{Proposition~2.5}, that $\a1(\Sigma) = -1$ and so only the last column needs to be computed.

It turns out this is also relatively straightforward and doesn't depend on $\a1$. In fact, as in \cite{TabuadaA1}*{Proposition~2.8}, this comes down to computing the filtration by triangles for $\Sigma^{-1}\str(1)$ as indicated in Remark \ref{rem:approximation}. This is essentially given by (the desuspension of) the Koszul complex
\begin{displaymath}
0 \to \str(-n) \to \str(-n+1)^{\oplus\binom{n+1}{n}} \to \cdots \to \str^{\oplus\binom{n+1}{1}} \to \str(1) \to 0
\end{displaymath}
and so we see that $\psi_{-i} = (-1)^{i+1}\binom{n+1}{i+1}$ for $0\leq i \leq n$.

Thus $\a1(\smodu \Lambda)$ is the cone of the endomorphism
\begin{displaymath}
\phi = 
\begin{pmatrix}
-1 & 0 & \cdots & 0 & (-1)^{n+1}\binom{n+1}{n+1} \\
-1 & -1 & \cdots & 0 & (-1)^{n}\binom{n+1}{n} \\
0 & -1 & \cdots & 0 & (-1)^{n-1}\binom{n+1}{n-1} \\
\vdots & \vdots & \cdots & \vdots & \vdots \\
0 & 0 & \cdots & -1 & \binom{n+1}{2} \\
0 & 0 & \cdots & -1 & -\binom{n+1}{1}-1
\end{pmatrix}
\end{displaymath}
of $\a1(k)^{\oplus n+1}$ and, as luck would have it, this cone is legitimately computable. Indeed, to compute the cone we can replace $\phi$ by its Smith normal form. An easy computation (the reader who is rightly suspicious of such statements will be reassured that we give an abstract justification for this computation in the next section, see the proof of Theorem~\ref{thm:Frobenius}) shows that the Smith normal form is
\begin{displaymath}
\phi' =
\begin{pmatrix}
1 & 0 & 0 &\cdots & 0 \\
0 & 1 & 0 &\cdots & 0 \\
0 & 0 & 1 & \cdots & 0 \\
\vdots & \vdots & \vdots & \cdots & \vdots \\
0 & 0 & 0 &\cdots & \det \phi
\end{pmatrix}
\end{displaymath}
where $\det\phi = \dim \Lambda = 2^{n+1}$. Hence we have proved:

\begin{thm}\label{thm:exterior}
Let $\Lambda$ denote the exterior algebra on $n+1$ generators. Then for any $\AA^1$-homotopy invariant $\a1$ we have
\begin{displaymath}
\a1(\smodu \Lambda) \cong \cone(\a1(k) \xymatrix{ \ar[rr]^-{\boldsymbol{\cdot} 2^{n+1}} && } \a1(k)).
\end{displaymath}
\end{thm}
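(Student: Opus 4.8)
The plan is to take the cofibre sequence that the discussion above already puts in our hands and to compute its cofibre explicitly by reducing a small integer matrix to Smith normal form. Concretely, Corollary~\ref{cor:Koszul} applied to the exterior algebra $\Lambda=\wedge(k(-1)^{n+1})$, whose Koszul dual is the polynomial algebra $\Lambda^!$, together with the BGG equivalence $\sgr\Lambda\cong\sfD^{\mathrm b}(\coh\PP^n)$ and the identification $\smodu\Lambda\cong\sfD_\sg(\modu\Lambda)$ (valid since $\Lambda$ is Frobenius), produces for every $\AA^1$-homotopy invariant $\a1$ a distinguished triangle
\[
\a1(\sfD^{\mathrm b}(\coh\PP^n))\xrightarrow{\ \a1(\Sigma^{-1}\str(1)\otimes\text{-})-\id\ }\a1(\sfD^{\mathrm b}(\coh\PP^n))\longrightarrow\a1(\smodu\Lambda)\longrightarrow.
\]
Thus $\a1(\smodu\Lambda)$ is the cone of $\phi:=\a1(\Sigma^{-1}\str(1)\otimes\text{-})-\id$, and the whole task is to identify $\phi$ and take its cone.

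First I would fix the Beilinson full exceptional collection $(\str(-n),\dots,\str(-1),\str)$; by Remark~\ref{rem:SOD} this gives $\a1(\sfD^{\mathrm b}(\coh\PP^n))\cong\a1(k)^{\oplus n+1}$, and I would write $\phi$ as a matrix in these coordinates. Twisting by $\str(1)$ merely translates the collection, $\str(-i)\mapsto\str(-i+1)$, which remains inside it for $i=1,\dots,n$; since $\a1(\Sigma)=-1$ by additivity (\cite{TabuadaA1}*{Proposition~2.5}), these contributions are $-1$, and together with $-\id$ they give the lower bidiagonal pattern on the first $n$ columns. The last column records the class of $\Sigma^{-1}\str(1)$, that is, the multiplicities of its filtration by triangles with subquotients in the $\thick(\str(-i))$ as in Remark~\ref{rem:approximation} and \cite{TabuadaA1}*{Proposition~2.8}; this filtration is (the desuspension of) the twisted Koszul resolution
\[
0\to\str(-n)\to\str(-n+1)^{\oplus\binom{n+1}{n}}\to\cdots\to\str^{\oplus\binom{n+1}{1}}\to\str(1)\to0,
\]
which determines the last column to be the alternating binomial coefficients displayed above. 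This pins down $\phi$ as an explicit element of $\End(\a1(k)^{\oplus n+1})$ given by an integer matrix.

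It then remains to compute $\cone(\phi)$. Here I would write $\phi=PDQ$ for its Smith normal form, with $P,Q\in\mathrm{GL}_{n+1}(\ZZ)$ and $D=\mathrm{diag}(d_1,\dots,d_{n+1})$; since $P$ and $Q$ have integer inverses they act as automorphisms of $\a1(k)^{\oplus n+1}$, so $\cone(\phi)\cong\cone(D)$, and since a finite direct sum of triangles is a triangle, $\cone(D)\cong\bigoplus_i\cone(\a1(k)\xrightarrow{\boldsymbol{\cdot}d_i}\a1(k))$, the summands with $d_i=\pm1$ being zero. So I only need the $d_i$. Two facts suffice: $|\det\phi|=2^{n+1}$, and deleting the last row and column of $\phi$ leaves a lower bidiagonal matrix with $-1$'s on the diagonal, of determinant $\pm1$, which forces $d_1=\cdots=d_n=1$ and hence $d_{n+1}=2^{n+1}$. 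For the determinant one can either expand along the last column (an elementary identity of binomial sums), or observe conceptually that multiplication by $[\str(1)]$ on $K_0(\PP^n)\cong\ZZ[t]/((t-1)^{n+1})$ has characteristic polynomial $(\lambda-1)^{n+1}$, so that if $M$ denotes its integer matrix then $\det\phi=\det(-M-\id)=(-1)^{n+1}\det(M+\id)=(-1)^{n+1}2^{n+1}$. Either way $\cone(\phi)\cong\cone(\a1(k)\xrightarrow{\boldsymbol{\cdot}2^{n+1}}\a1(k))$, and since $2^{n+1}=\dim\Lambda$ this is the claimed statement.

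The step I expect to be the main obstacle is the precise identification of the last column of $\phi$: one must be confident that its entries are genuinely the ranks of the classes $[X_i]\in K_0(\thick(\str(-i)))\cong\ZZ$ appearing in the Beilinson--Koszul filtration of $\Sigma^{-1}\str(1)$, with the correct signs and without an extra twist by the action of $\a1(\Sigma)$. Once the matrix is fixed, the determinant and Smith-normal-form computation is routine, and the passage from the matrix to the cone is formal.
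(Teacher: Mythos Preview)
Your argument is correct and follows the paper's approach essentially verbatim: the same cofibre sequence from Corollary~\ref{cor:Koszul}, the same Beilinson collection, the same Koszul-complex computation of the last column, and the same reduction to Smith normal form. The one genuine difference is in justifying the Smith normal form: the paper simply asserts it (deferring to the abstract argument in the proof of Theorem~\ref{thm:Frobenius}, which bootstraps from Tachikawa--Wakamatsu's computation of $K_0(\smodu\Lambda)$), whereas you give a direct elementary argument\textemdash{}the $n\times n$ upper-left minor is $\pm1$, forcing $d_1=\cdots=d_n=1$, and the determinant is $\pm2^{n+1}$ via the characteristic polynomial of $[\str(1)]$ acting on $K_0(\PP^n)\cong\ZZ[t]/((t-1)^{n+1})$. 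Your route is self-contained and avoids the forward reference; the paper's route has the advantage of working uniformly for all connected-graded self-injective algebras without needing to know the matrix explicitly.
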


\begin{ex}
We could take $\a1$ to be Weibel's homotopy K-theory $KH$. In this case we recover, for example, the computation that 
\begin{displaymath}
K_0(\smodu \Lambda) \cong KH_0(\smodu \Lambda) \cong \ZZ/2^{n+1}\ZZ.
\end{displaymath}
\end{ex}


\subsection{Connected graded self-injective algebras}

We now indicate how the argument given for exterior algebras generalizes. Let $\Lambda$ be a finite dimensional basic Artin-Schelter Gorenstein $k$-algebra with $\Lambda_0 = \Lambda/\rad(\Lambda)$, where $\rad(\Lambda)$ denotes the ungraded radical, and $n$ simples all of which we assume are $1$-dimensional. In particular, $\Lambda$ is self-injective. Theorem~\ref{thm:exterior} naturally extends to this setting. In order to prove this we first need a technical lemma along the lines of \cite{TabuadaA1}*{Corollary~1.6}.

\begin{lem}\label{lem:nicesave}
There is an isomorphism $K_0(\smodu \Lambda) \cong KH_0(\smodu \Lambda)$.
\end{lem}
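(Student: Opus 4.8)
The plan is to show that $KH$ does not see the difference between the ungraded stable category and its ``graded-orbit'' model well enough that the degree-zero part recovers $K_0$, by exploiting the specific shape of the cofibre sequence in Theorem~\ref{thm_main1}. Since $\Lambda$ is finite dimensional basic Artin--Schelter Gorenstein with $\Lambda_0 = \Lambda/\rad(\Lambda)$ and $1$-dimensional simples, Corollary~\ref{cor:Gor} applies with $\a1 = KH$: there is a cofibre sequence
\begin{displaymath}
\xymatrix{
KH(k)^{\oplus n|a|} \ar[r]^-{\phi} & KH(k)^{\oplus n|a|} \ar[r] & KH(\sfD_\sg(\modu \Lambda)) \ar[r] &
}
\end{displaymath}
with $\phi = KH(-1) - \id$ of the displayed lower-triangular-plus-last-column form. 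The same argument run with $\a1 = K$ (ordinary algebraic $K$-theory is \emph{not} an $\AA^1$-invariant, but $K_0$ of the relevant categories can be accessed directly, or one replaces $K$ by $KH$ after checking the categories are $K_0$-regular) gives the matching statement on $K_0$. So first I would set up both long exact sequences: the one obtained by applying $\pi_*$ to the $KH$ cofibre sequence, and the analogous exact sequence for $K_0$ coming from the semi-orthogonal decomposition of $\sfD_\sg(\gr\Lambda)$ together with the localization/orbit exact sequence of Grothendieck groups (the $K_0$ analogue of Theorem~\ref{thm:tabuada}, which is classical for orbit/Verdier quotients).

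Next I would compare the two. The point is that $KH_0(k) \cong K_0(k) \cong \ZZ$ and, more relevantly, $KH_{-1}(k) = 0$ while $KH_1(k) \cong k^\times$; and $\sfD_\sg(\gr\Lambda)$ has a full exceptional collection built from the $1$-dimensional simples $\Lambda_0(j)$, so $KH_*(\sfD_\sg(\gr\Lambda)) \cong KH_*(k)^{\oplus n|a|}$ in every degree. Splicing these into the long exact sequence associated to the $KH$ cofibre triangle yields
\begin{displaymath}
KH_1(k)^{\oplus n|a|} \xrightarrow{\ \phi_1\ } KH_1(k)^{\oplus n|a|} \to KH_1(\smodu\Lambda) \to \ZZ^{\oplus n|a|} \xrightarrow{\ \phi_0\ } \ZZ^{\oplus n|a|} \to KH_0(\smodu\Lambda) \to 0,
\end{displaymath}
where $\phi_0$ is the \emph{integral} matrix of the displayed form and $\phi_1 = \phi_0 \otimes_{\ZZ} k^\times$ is the same matrix acting on $(k^\times)^{\oplus n|a|}$ (using additivity, $\a1(\Sigma) = -\id$, so the entries of $\phi$ are honest integers independent of $\a1$, exactly as in the exterior-algebra computation). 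Therefore $KH_0(\smodu\Lambda) \cong \coker\phi_0$. On the other hand, the $K_0$-side exact sequence reads $K_0(\sfD_\sg(\gr\Lambda)) \xrightarrow{\phi_0} K_0(\sfD_\sg(\gr\Lambda)) \to K_0(\smodu\Lambda) \to 0$ with the \emph{same} matrix $\phi_0$ (it is $K_0(-1) - \id$ written in the exceptional-collection basis), so $K_0(\smodu\Lambda) \cong \coker\phi_0$ as well. Hence $K_0(\smodu\Lambda) \cong KH_0(\smodu\Lambda)$.

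\textbf{Main obstacle.} The delicate point is ensuring that the \emph{same} integral matrix $\phi_0$ governs both coker computations, i.e.\ that the natural comparison map $K(\smodu\Lambda) \to KH(\smodu\Lambda)$ is compatible with the two exact sequences and induces the identity on the bottom row. This requires knowing that $\sfD_\sg(\gr\Lambda)$ (equivalently a full exceptional collection's worth of copies of $k$) is $K_0$-regular so that $K_0 \xrightarrow{\sim} KH_0$ there, that the orbit-category exact sequence of Theorem~\ref{thm:tabuada} is the $KH$-realization of the evident $K_0$ exact sequence for the Verdier/orbit quotient, and that $\phi = \a1(-1)-\id$ really is induced by the integral matrix (which is where additivity and the explicit exceptional-collection combinatorics of Corollary~\ref{cor:Gor}/Remark~\ref{rem:approximation} enter). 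Once naturality of the comparison transformation $K \Rightarrow KH$ across the relevant localization and orbit sequences is in hand, the five-lemma-style chase closes the argument; I expect the only real work is bookkeeping that these diagrams commute on the nose, all of which is standard (cf.\ \cite{TabuadaA1}*{Corollary~1.6}).
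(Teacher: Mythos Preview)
Your overall strategy is right and close to the paper's, but there is a genuine gap in the primary line you sketch. You assert that the sequence
\[
K_0(\sgr\Lambda) \xrightarrow{\ \phi_0\ } K_0(\sgr\Lambda) \xrightarrow{\ K_0(\pi)\ } K_0(\smodu\Lambda) \to 0
\]
is exact at the middle term, calling this ``classical for orbit/Verdier quotients''. But $\smodu\Lambda \simeq \Perf(\sgr\Lambda/(1))$ is \emph{not} a Verdier quotient of $\sgr\Lambda$, and there is no off-the-shelf $K_0$ exact sequence for orbit categories. What is easy is only that the composite is zero (since $\pi(1)\cong\pi$) and that $K_0(\pi)$ is surjective (since the simples are gradable and generate). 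Exactness at the middle, i.e.\ $\ker K_0(\pi)=\im\phi_0$, is precisely the nontrivial content and you have not justified it; your ``five-lemma-style chase'' would also need both rows exact, which you do not have.

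The paper's argument sidesteps this entirely. It sets up the commutative ladder coming from the natural transformation $K\Rightarrow KH$ applied to $\sgr\Lambda\xrightarrow{(1)}\sgr\Lambda\xrightarrow{\pi}\smodu\Lambda$. The first two vertical maps are isomorphisms because $\sgr\Lambda$ has a full exceptional collection, so $K(\sgr\Lambda)\cong K(k)^{\oplus n|a|}\cong KH(k)^{\oplus n|a|}\cong KH(\sgr\Lambda)$. On the $KH$ row one does get exactness (from the cofibre triangle and $KH_{-1}(k)=0$), but on the $K_0$ row the paper uses only the two easy facts above: composite zero and surjectivity of $K_0(\pi)$. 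A direct chase (not the five lemma) then shows the comparison $K_0(\smodu\Lambda)\to KH_0(\smodu\Lambda)$ is an isomorphism: surjectivity of $K_0(\pi)$ gives surjectivity of the comparison, and if $c(x)=0$ then any lift $g\in K_0(\sgr\Lambda)$ with $K_0(\pi)(g)=x$ lies in $\ker KH_0(\pi)=\im\phi_0\subseteq\ker K_0(\pi)$, so $x=0$. In short, your ``main obstacle'' paragraph already names the correct mechanism; the point you are missing is that the chase requires exactness only on the $KH$ side, not on the $K_0$ side.
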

\begin{proof}
There is a natural comparison map $K\to KH$. Using this comparison and the sequence
\begin{displaymath}
\xymatrix{
\sgr \Lambda \ar[r]^-{(1)} & \sgr \Lambda \ar[r]^-{\pi} & \smodu \Lambda \cong \Perf(\sgr \Lambda / (1))
}
\end{displaymath}
we get a commutative diagram
\begin{displaymath}
\xymatrix{
K_0(\sgr \Lambda) \ar[rr]^-{K_0((1)) - 1} \ar[d]^-\wr & &K_0(\sgr \Lambda) \ar[r]^-{K_0(\pi)} \ar[d]^-\wr & K_0(\smodu \Lambda) \ar[d] & \\
KH_0(\sgr \Lambda) \ar[rr]^-{KH_0((1)) - 1} && KH_0(\sgr \Lambda) \ar[r]^-{KH_0(\pi)} & KH_0(\smodu \Lambda) \ar[r] & 0 \\
}
\end{displaymath}
where the top composite is zero, and the first two vertical maps are isomorphisms since as in Corollary~\ref{cor:Gor} the category $\sgr \Lambda$ has a full exceptional collection so
\begin{displaymath}
KH(\sgr \Lambda) \cong KH(k)^{\oplus n\vert a \vert} \cong K(k)^{\oplus n\vert a \vert} \cong K(\sgr \Lambda).
\end{displaymath}
by derived Morita invariance and additivity (cf.\ \cite{TabuadaFT}*{Proposition~2.3} and \cite{WeibelKBook}*{Example~IV.12.5.1}). It also follows that the bottom row is exact since $0 = K_{-1}(k)^{\oplus n\vert a \vert} \cong KH_{-1}(\sgr \Lambda)$. Furthermore, the map $K_0(\pi)$ is surjective: the classes of the simple modules generate $K_0(\smodu \Lambda)$ and these classes are in the image of $K_0(\pi)$ as we have assumed the simples are all gradable. It then follows from a diagram chase that the third vertical map is an isomorphism as claimed.

\end{proof}

We denote by $C_\Lambda$ the Cartan matrix of $\Lambda$. It is the $n\times n$ integer matrix, where $n$ is the number of simples, whose entry in position $(i,j)$ is $\dim_k\Hom_\Lambda(P_i,P_j)$ for some fixed, arbitrary, order on the simple modules, where $P_i$ is the projective with top the $i$th simple.

\begin{thm}\label{thm:Frobenius}
For any $\AA^1$-homotopy invariant $\a1$ we have
\begin{displaymath}
\a1(\smodu \Lambda) \cong \cone(\a1(k)^{\oplus n} \xymatrix{\ar[r]^-{C_\Lambda} &} \a1(k)^{\oplus n})
\end{displaymath}
where $C_\Lambda$ is the Cartan matrix.
\end{thm}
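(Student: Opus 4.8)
The plan is to feed the cofibre sequence of Corollary~\ref{cor:Gor} into a Grothendieck-group computation, so that the (large) matrix appearing there never has to be computed explicitly --- in contrast to the hands-on treatment of exterior algebras. Since $\Lambda$ is self-injective we have $\sfD_\sg(\modu \Lambda) \cong \smodu \Lambda$, so Corollary~\ref{cor:Gor} already gives, for every $\AA^1$-homotopy invariant $\a1$, a distinguished triangle exhibiting $\a1(\smodu \Lambda) \cong \cone(\a1(k)^{\oplus n|a|} \xrightarrow{\phi} \a1(k)^{\oplus n|a|})$, where $\phi$ is the matrix displayed there. The entries of $\phi$ are integers (equivalently, its blocks are integer matrices): the $\pm 1$'s are images under $\a1$ of the grading-shift equivalences permuting the blocks of the semi-orthogonal decomposition, and the last block-column records the classes $[X_i] \in K_0(\thick(\Lambda_0(i))) \cong \ZZ^n$ of the subquotients in the approximation triangles of Remark~\ref{rem:approximation}; so $\phi \in M_{n|a|}(\ZZ)$, acting on $\a1(k)^{\oplus n|a|}$ through the unit $\ZZ \to \End_\sfT(\a1(k))$. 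The key observation is then purely formal: if $A \in M_p(\ZZ)$ and $B \in M_q(\ZZ)$ satisfy $\coker A \cong \coker B$ as abelian groups, then for \emph{every} $\AA^1$-homotopy invariant $\a1$ one has $\cone(\a1(k)^{\oplus p} \xrightarrow{A} \a1(k)^{\oplus p}) \cong \cone(\a1(k)^{\oplus q} \xrightarrow{B} \a1(k)^{\oplus q})$. Indeed, passing to Smith normal forms changes $A$ and $B$ only by automorphisms of the relevant $\a1(k)^{\oplus \bullet}$ and hence not the cone; by the structure theorem for finitely generated abelian groups the two normal forms carry the same list of invariant factors different from $\pm 1$ and differ only in the number of unit entries; and $\cone(\a1(k) \xrightarrow{\pm 1} \a1(k)) = 0$. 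Applying this with $A = \phi$ and $B = C_\Lambda$ reduces the theorem to the single identity $\coker \phi \cong \coker C_\Lambda$.

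To establish that identity I would take $\a1$ to be Weibel's homotopy $K$-theory $KH$, which is an $\AA^1$-homotopy invariant. Passing to homotopy groups in the triangle above and using $KH_{-1}(k) = K_{-1}(k) = 0$, the long exact sequence collapses to $\coker(\phi \colon \ZZ^{n|a|} \to \ZZ^{n|a|}) \cong KH_0(\smodu \Lambda)$. By Lemma~\ref{lem:nicesave}, $KH_0(\smodu \Lambda) \cong K_0(\smodu \Lambda)$, and finally $K_0(\smodu \Lambda) \cong \coker C_\Lambda$: writing $\smodu \Lambda \cong \sfD_\sg(\modu \Lambda) = \sfD^\bdd(\modu \Lambda)/\sfD^\perf(\modu \Lambda)$, the group $K_0(\smodu \Lambda)$ is $K_0(\modu \Lambda) \cong \ZZ^n$ modulo the classes of the indecomposable projectives, and because the simples are $1$-dimensional the class of $P_j$ written in the basis of simples is exactly the $j$-th column of the Cartan matrix $C_\Lambda$ (this is the Tachikawa--Wakamatsu computation recalled in the introduction). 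Chaining these isomorphisms gives $\coker \phi \cong \coker C_\Lambda$, and the formal principle of the first paragraph then delivers $\a1(\smodu \Lambda) \cong \cone(\a1(k)^{\oplus n} \xrightarrow{C_\Lambda} \a1(k)^{\oplus n})$.

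The only genuinely non-formal ingredient is Lemma~\ref{lem:nicesave}, which is already in hand; everything else is linear algebra over $\ZZ$ together with the elementary stability of cones under pre- and post-composition with isomorphisms. The step most deserving of care is the formal principle invoked above, especially the fact that it compares matrices of different sizes ($n|a|$ versus $n$) inside an arbitrary triangulated target $\sfT$: the point is simply that adjoining unit rows and columns to an integer matrix $A$ does not alter $\cone(\a1(k)^{\oplus\bullet} \xrightarrow{A} \a1(k)^{\oplus\bullet})$, since such a cone splits off copies of $\cone(\a1(k)\xrightarrow{1}\a1(k)) = 0$. As a bonus, this argument retroactively supplies the ``abstract justification'' promised during the exterior-algebra computation: there $\Lambda$ is local Frobenius, so $\coker\phi \cong K_0(\smodu\Lambda) \cong \ZZ/(\dim \Lambda)\ZZ$ is cyclic of order $\dim \Lambda$, which forces the Smith normal form of $\phi$ to be $\mathrm{diag}(1,\dots,1,\dim \Lambda)$ and hence recovers Theorem~\ref{thm:exterior}.
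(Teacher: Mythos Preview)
Your proof is correct and follows essentially the same route as the paper: obtain the integer matrix $\phi$ from Corollary~\ref{cor:Gor}, specialise to $KH$ and use Lemma~\ref{lem:nicesave} together with the Tachikawa--Wakamatsu computation to identify $\coker\phi\cong\coker C_\Lambda$, and then invoke Smith normal form to transport this to arbitrary $\a1$. The only difference is cosmetic\textemdash{}you package the Smith-normal-form step as a standalone ``formal principle'' about integer matrices with isomorphic cokernels, whereas the paper states the resulting normal form of $\phi$ directly.
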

\begin{proof}
By Corollary~\ref{cor:Gor} we know there is a cofibre sequence 
\begin{displaymath}
\xymatrix{
\a1(k)^{\oplus n|a|} \ar[r]^-\phi & \a1(k)^{\oplus n|a|} \ar[r] & \a1(\smodu \Lambda) \ar[r] &
}
\end{displaymath}
where $\phi$ has the form indicated in the Corollary (and as in the last section and \cite{TabuadaA1}*{Proposition~2.8}). In particular, $\phi$ is an integer matrix which does not depend on $\a1$.

Taking $\a1$ to be homotopy K-theory and looking at $0$th homotopy groups we get an exact sequence
\begin{displaymath}
\ZZ^{\oplus n|a|} \stackrel{\phi}{\to} \ZZ^{\oplus n|a|} \to KH_0(\smodu \Lambda) \to 0
\end{displaymath}
as in the proof of the previous lemma. Moreover, by said lemma there is an isomorphism $KH_0(\smodu \Lambda) \cong K_0(\smodu \Lambda)$ and we know that $K_0(\smodu \Lambda) \cong \coker(C_\Lambda)$ by \cite{TW91}. This is only possible if the Smith normal form of $\phi$ is
\begin{displaymath}
\begin{pmatrix}
\mathrm{Id}_{n(\vert a\vert -1)} & 0 \\
0 & S_\Lambda
\end{pmatrix}
\end{displaymath}
up to signs, where $S_\Lambda$ is the Smith normal form of $C_\Lambda$.
Since $\phi$ is independent of the invariant $\a1$ and we can use its Smith normal form to compute the cone in the cofibre sequence computing $\a1(\smodu \Lambda)$ the result follows.
\end{proof}

In particular, the $\AA^1$-invariants of the stable categories of such rings only depend on the Cartan matrix, i.e.\ on the dimension vectors of the projectives.

\begin{rem}
This extends \cite{TW91}*{Proposition~1}, which computes the Grothendieck group of the stable category, to arbitrary $\AA^1$-homotopy invariants.
\end{rem}



This allows us to perform various explicit computations over a finite field where we know the homotopy K-theory explicitly by the work of Quillen \cite{Quillen}*{Theorem~8}. For instance, we deduce the following corollary.

\begin{cor}
Let $\Lambda$ be a self-injective algebra over $\FF_p$ admitting a connected grading and of dimension $p^n$ for some $n\geq 1$. Then
\begin{displaymath}
KH_i(\smodu \Lambda) = \left\{\begin{array}{lr}
\ZZ/p^n\ZZ & \text{if } i=0; \\
0 & \text{if } i\geq 1.
\end{array}
\right.
\end{displaymath} 
In particular the inclusion $\sfD^\mathrm{perf}(\Lambda) \to \sfD^\mathrm{b}(\modu \Lambda)$ induces isomorphisms
\begin{displaymath}
KH_i(\Lambda) \stackrel{\sim}{\to} KH_i(\sfD^\mathrm{b}(\modu \Lambda))
\end{displaymath}
for $i\geq 1$.
\end{cor}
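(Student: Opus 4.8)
The plan is to deduce the corollary from Theorem~\ref{thm:Frobenius} together with Quillen's computation of the K-theory of finite fields. First I would apply Theorem~\ref{thm:Frobenius} with $\a1 = KH$ (Weibel's homotopy K-theory, which is a genuine $\AA^1$-homotopy invariant with values in spectra) to obtain a cofibre sequence
\begin{displaymath}
\xymatrix{
KH(\FF_p)^{\oplus n} \ar[r]^-{C_\Lambda} & KH(\FF_p)^{\oplus n} \ar[r] & KH(\smodu \Lambda) \ar[r] &
}
\end{displaymath}
and hence a long exact sequence of homotopy groups. The key numerical input is that $\det C_\Lambda = \pm \dim_k \Lambda = \pm p^n$: for a self-injective (in fact Frobenius) algebra this is classical, and it already appears implicitly in the proof of Theorem~\ref{thm:Frobenius} via $K_0(\smodu\Lambda)\cong\coker C_\Lambda$ being finite of order $|\det C_\Lambda|$.

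Next I would invoke Quillen's theorem \cite{Quillen}*{Theorem~8}: $KH_0(\FF_p)\cong K_0(\FF_p)\cong\ZZ$, $KH_{2i-1}(\FF_p)\cong K_{2i-1}(\FF_p)\cong\ZZ/(p^i-1)\ZZ$ for $i\geq 1$, and $KH_{2i}(\FF_p)=0$ for $i\geq 1$ (and all negative groups vanish since $\FF_p$ is regular). The crucial observation is that each nonzero positive-degree group $\ZZ/(p^i-1)\ZZ$ has order coprime to $p$. Since $C_\Lambda$ is an integer matrix with determinant $\pm p^n$, the induced map $C_\Lambda\colon (\ZZ/(p^i-1))^{\oplus n}\to(\ZZ/(p^i-1))^{\oplus n}$ is an isomorphism: its determinant $\pm p^n$ is a unit modulo $p^i-1$. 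Therefore the map $KH_j(\FF_p)^{\oplus n}\xrightarrow{C_\Lambda}KH_j(\FF_p)^{\oplus n}$ is an isomorphism for every $j\geq 1$, and the long exact sequence forces $KH_j(\smodu\Lambda)=0$ for all $j\geq 1$. For $j=0$ the long exact sequence reads
\begin{displaymath}
0 \to \ZZ^{\oplus n} \xrightarrow{C_\Lambda} \ZZ^{\oplus n} \to KH_0(\smodu\Lambda)\to 0
\end{displaymath}
(exactness on the left because $KH_{-1}(\FF_p)^{\oplus n}=0$ and $C_\Lambda$ is injective, having nonzero determinant), so $KH_0(\smodu\Lambda)\cong\coker C_\Lambda$, which has order $p^n$; consistency with Lemma~\ref{lem:nicesave} and \cite{TW91} identifies it with $\ZZ/p^n\ZZ$ (alternatively, the Smith normal form computation in the proof of Theorem~\ref{thm:Frobenius} gives this directly once one knows $K_0(\smodu\Lambda)\cong\ZZ/p^n\ZZ$, which holds because $C_\Lambda$ over $\ZZ$ has all but one elementary divisor coprime to the last up to the prime $p$ — in fact for these algebras $\coker C_\Lambda$ is cyclic).

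For the final assertion I would apply $KH$ to the Verdier localization sequence $\Perf\Lambda\to\sfD^\bdd(\modu\Lambda)\to\sfD_\sg(\modu\Lambda)$, noting $\sfD_\sg(\modu\Lambda)=\smodu\Lambda$ since $\Lambda$ is self-injective, to get a long exact sequence relating $KH_*(\Lambda)$, $KH_*(\sfD^\bdd(\modu\Lambda))$, and $KH_*(\smodu\Lambda)$. Since $KH_i(\smodu\Lambda)=0$ for $i\geq 1$ and $KH_0(\smodu\Lambda)$ is $p$-torsion while $KH_{-1}(\smodu\Lambda)$ also vanishes (it sits in the cofibre sequence between two copies of $KH_{-1}(\FF_p)^{\oplus n}=0$), the connecting maps vanish in the relevant range and $KH_i(\Lambda)\xrightarrow{\sim}KH_i(\sfD^\bdd(\modu\Lambda))$ for $i\geq 1$ follows immediately. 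The only mild subtlety — and the main thing to get right — is bookkeeping the indexing in Quillen's theorem and confirming that $p\nmid(p^i-1)$ so that $C_\Lambda$ acts invertibly on the odd K-groups; everything else is a formal consequence of the cofibre sequence from Theorem~\ref{thm:Frobenius}.
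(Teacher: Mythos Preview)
Your approach is the same as the paper's: apply Theorem~\ref{thm:Frobenius} with $\a1=KH$, feed in Quillen's computation of $K_*(\FF_p)$, note that $p^n$ is a unit in $\ZZ/(p^i-1)\ZZ$, and then read off the second statement from the localization sequence $\Perf\Lambda\to\sfD^\bdd(\modu\Lambda)\to\smodu\Lambda$.

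One slip to fix: the hypothesis of a \emph{connected} grading forces $\Lambda_0=\FF_p$, so $\Lambda$ is local with a single simple and the Cartan matrix is the $1\times 1$ matrix $(\dim_{\FF_p}\Lambda)=(p^n)$. Thus the cofibre sequence from Theorem~\ref{thm:Frobenius} is simply $KH(\FF_p)\xrightarrow{\cdot p^n}KH(\FF_p)\to KH(\smodu\Lambda)$, and the $n$ in the statement is the dimension exponent, not a number of simples. Your general assertion that $\det C_\Lambda=\pm\dim_k\Lambda$ for self-injective algebras is false (take $\Lambda=k\times k$, where $\det C_\Lambda=1$ but $\dim\Lambda=2$), so the justification you give for the ``key numerical input'' does not stand as written; once you use that the algebra is local the issue disappears and the rest of your argument goes through verbatim.
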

\begin{proof}
Taking $\a1 = KH$ in Theorem~\ref{thm:Frobenius} and taking homotopy groups immediately yields the first computation by inspecting the resulting long exact sequence. Indeed, by \cite{WeibelKBook}*{12.3.1} the homotopy K-theory of $\FF_p$ agrees with the usual algebraic K-theory which was computed by Quillen \cite{Quillen}, so we just observe that $p^n$ is invertible in $K_{2i-1}(\FF_p) \cong \ZZ/(p^i-1)\ZZ$ for $i\geq 1$ and $K_{2i}(\FF_p)$ is zero for $i\geq 1$. The second statement then follows from the long exact sequence for the localization sequence
\begin{displaymath}
\sfD^\mathrm{perf}(\Lambda) \to \sfD^\mathrm{b}(\Lambda) \to \smodu \Lambda.
\end{displaymath}
\end{proof}

\begin{ex}
This applies for instance to the group algebra of $E^r = (\ZZ/p\ZZ)^{\oplus r}$ over $\FF_p$, showing that $\smodu \FF_pE^r$ has no higher homotopy K-theory. In this case one can interpret the second statement of the corollary as computing the homotopy K-theory of cochains on the classifying space of $E^r$:
\begin{displaymath}
KH_i(C^*(BE^r;\FF_p)) \cong K_i(\FF_p) \quad \text{ for all } i\geq 0.
\end{displaymath}
\begin{proof}
We first note that, by identifying $C^*(BE^r;\FF_p)$ with $\Ext^*_{kE^r}(k,k)$, there is an equivalence (with a DG-enhancement) $\Perf(C^*(BE^r;\FF_p)) \cong \sfD^\mathrm{b}(\FF_pE^r)$. By the corollary we thus have isomorphisms 
\begin{displaymath}
KH_i(C^*(BE^r;\FF_p)) \cong KH_i(\sfD^\mathrm{b}(\FF_pE^r)) \cong KH_i(\FF_pE^r) \quad \text{ for all } i\geq 0.
\end{displaymath}
It remains to note that, as $KH$ doesn't detect nilpotent extensions and agrees with usual $K$-theory for regular rings (see for instance \cite{WeibelKBook} Corollary~IV.12.5 and IV.12.3.1 respectively), we have $KH_i(\FF_pE^r) \cong KH_i(\FF_p) \cong K_i(\FF_p)$ for all $i\geq 0$.
\end{proof}
\end{ex}

\begin{rem}
One can use the theorem to produce many $\AA^1$\emph{-homotopy phantoms}, i.e.\ DG-categories all of whose $\AA^1$-invariants are trivial. Indeed, the stable category of any suitable $\Lambda$ with invertible Cartan matrix will do. This will be pursued in future work.
\end{rem}



\section{Phantoms from cluster theory}

In this section we observe that in Dynkin type $A_{2n}$ cluster categories have trivial $\AA^1$-homotopy invariants, i.e.\ they are `$\AA^1$-homotopy phantoms'. This is straightforward from work of Tabuada, who gave an expression for the $\AA^1$-homotopy invariants of cluster categories of finite quivers without oriented cycles in \cite{TabuadaA1}*{Corollary~2.11}. 

However, it isn't made explicit there that for $A_{2n}$ the stars align so that these invariants always vanish. We feel this is worth noting as the phantoms occurring in algebraic geometry are notoriously slippery, see \cite{Sosna} for an introduction and several open questions, yet here we have a very concrete family of categories with trivial homotopy K-theory just lying around in the representation theorist's toolbox.

We now sketch the relevant setup (which proceeds, as one might expect, following \cite{TabuadaA1} and what we have done above).

Let us fix some field $k$ and consider $\sfD^\mathrm{b}(\modu kA_n)$, where for simplicity we will always think of $A_n$ with the linear orientation
\begin{displaymath}
1 \to 2 \to \cdots \to n.
\end{displaymath}
Of course, the derived category is independent of the orientation so this is purely a matter of convenience. The simples $S_i$ form a generating set for $\sfD^\mathrm{b}(\modu kA_n)$ and, in fact, $(S_n,\ldots,S_1)$ is a full exceptional collection. Thus, if $\a1$ is any $\AA^1$-homotopy invariant, we have
\begin{displaymath}
\a1(\sfD^\mathrm{b}(\modu kA_n)) \cong \a1(\thick(S_1)) \oplus \cdots \oplus \a1(\thick(S_n)) \cong \a1(k)^{\oplus n}.
\end{displaymath}

The ($2$-)cluster category of $A_n$ over $k$ is obtained by taking orbits by $\Sigma\tau^{-1}$
\begin{displaymath}
\sfC_{A_n} = \sfD^\mathrm{b}(\modu kA_n) / \Sigma\tau^{-1}.
\end{displaymath}
So we have, by \cite{TabuadaA1}*{Theorem~1.5}, an identification
\begin{displaymath}
\a1(\sfC_{A_n}) = \cone(\a1(\Sigma)\a1(\tau^{-1}) - 1).
\end{displaymath}
By additivity for $\AA^1$-homotopy invariants \cite{TabuadaA1}*{Proposition~2.5} we know $\a1(\Sigma) = -1$ and so it just remains to write down $\a1(\tau^{-1})$ in terms of the decomposition coming from the simples. We know that
\begin{displaymath}
\tau^{-1} S_i = S_{i+1}
\end{displaymath}
for $i\leq n-1$ and $\tau^{-1} S_n = \Sigma P_n$. Thus $\a1(\tau^{-1})$ can be represented by the $n\times n$ matrix
\begin{displaymath}
\begin{pmatrix}
0 & 0 & \cdots & 0 & -1 \\
1 & 0 & \cdots & 0 & -1 \\
0 & 1 & \cdots & 0 & -1 \\
\vdots & \vdots & \cdots & \vdots & \vdots \\
0 & 0 & \cdots & 1 & -1
\end{pmatrix}
\end{displaymath}
with respect to the decomposition coming from the exceptional collection $(S_n,\ldots,S_1)$.

Our rather modest observation is the following lemma.

\begin{lem}
If $n$ is even then the matrix
\begin{displaymath}
\a1(\Sigma\tau^{-1}) -1 = -\a1(\tau^{-1}) - 1 =
\begin{pmatrix}
-1 & 0 & \cdots & 0 & 1 \\
-1 & -1 & \cdots & 0 & 1 \\
0 & -1 & \cdots & 0 & 1 \\
\vdots & \vdots & \cdots & \vdots & \vdots \\
0 & 0 & \cdots & -1 & 0
\end{pmatrix}
\end{displaymath}
has determinant $1$. In particular, it is an automorphism of
\begin{displaymath}
\a1(\sfD^\perf(\modu kA_n)) \cong \a1(k)^{\oplus n}.
\end{displaymath}
\end{lem}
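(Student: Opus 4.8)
\section*{Proof proposal}

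The plan is to compute the determinant directly, exploiting the fact that $\a1(\tau^{-1})$ is, up to the chosen labelling, a companion matrix. First recall that additivity gives $\a1(\Sigma) = -1$ (\cite{TabuadaA1}*{Proposition~2.5}), so the operator in question is $-\a1(\tau^{-1}) - \id$ and it suffices to analyse $\a1(\tau^{-1})$. From the formula above this matrix has $1$'s on the subdiagonal, $-1$'s down the last column, and zeros elsewhere, which is exactly the companion matrix of
\[
f(x) = 1 + x + x^2 + \cdots + x^n = \frac{x^{n+1}-1}{x-1};
\]
in particular its characteristic polynomial is $f$.

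Granting this, the computation is immediate:
\[
\det\bigl(-\a1(\tau^{-1}) - \id\bigr) = \det\bigl((-1)\cdot\id - \a1(\tau^{-1})\bigr) = f(-1) = \sum_{k=0}^{n}(-1)^k,
\]
which equals $1$ precisely when $n$ is even, the sum then having an odd number of terms. An integer matrix of determinant $1$ lies in $\mathrm{GL}_n(\ZZ)$ and hence induces an automorphism of $X^{\oplus n}$ for any object $X$ of any additive category; applying this with $X = \a1(k)$ and the identification $\a1(\sfD^\perf(\modu kA_n)) \cong \a1(k)^{\oplus n}$ coming from the exceptional collection $(S_n,\ldots,S_1)$ gives the lemma.

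If one prefers to avoid companion matrices, the same conclusion follows from Laplace expansion of the displayed matrix along its first column: writing $D_n$ for its determinant, one recognises the first minor as the analogous $(n-1)\times(n-1)$ matrix and evaluates the second minor by one further expansion (it reduces to an upper-triangular bidiagonal matrix with $-1$ on the diagonal), obtaining the recursion $D_n = 1 - D_{n-1}$ with $D_2 = 1$, so $D_n = 1$ for $n$ even and $0$ for $n$ odd. Either way there is no genuine obstacle here; the only points needing care are the sign bookkeeping and the (purely formal) observation that, since $\a1(\Sigma) = -1$ and all remaining entries are $0$ or $1$, the operator really is an integer matrix, so that ``determinant $1$'' does force invertibility in the target category.
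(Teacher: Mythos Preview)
Your argument is correct. Your primary route, via the observation that $\a1(\tau^{-1})$ is the companion matrix of $f(x)=1+x+\cdots+x^n$, is a genuinely different and cleaner approach than the paper's: the paper proceeds by induction on $n$, expanding $\phi_n$ along the first row to obtain the recursion $\det(\phi_n) = -\det(\phi_{n-1}) + 1$ (together with the base case $\det(\phi_2)=1$), which is precisely the recursion you record in your alternative paragraph. Your companion-matrix calculation $\det\bigl((-1)\id - \a1(\tau^{-1})\bigr)=f(-1)$ bypasses the induction entirely and makes the dichotomy between even and odd $n$ transparent from the closed form $f(-1)=\sum_{k=0}^n(-1)^k$; the price is that one must know (or check) the standard fact about characteristic polynomials of companion matrices, whereas the paper's expansion is self-contained. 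Your second paragraph is essentially the paper's proof, differing only in expanding along the first column rather than the first row.
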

\begin{proof}
We will in fact prove that the determinant of the above matrix, which we denote for the duration of the proof by $\phi_n$, is $1$ if $n$ is even and $0$ if $n$ is odd. We proceed by induction on $n$ starting with the case $n=2$, i.e.\
\begin{displaymath}
\phi_2 =
\begin{pmatrix}
-1 & 1 \\
-1 & 0
\end{pmatrix}
\end{displaymath}
where one just observes the determinant is $1$. Assume then that the claim holds for $n-1 \geq 2$. By taking the Laplace expansion along the first row of $\phi_n$ we see
\begin{align*}
\det(\phi_{n}) &= (-1)\det(\phi_{n-1}) + (-1)^{n+1}\det(X_n) \\
&= -\det(\phi_{n-1}) + (-1)^{2n} \\
&= -\det(\phi_{n-1}) + 1,
\end{align*}
where $X_n$ is the $(n-1) \times (n-1)$-matrix
\begin{displaymath}
X_n = 
\begin{pmatrix}
-1 & -1 & 0 & \cdots & 0 \\
0 & -1 & -1 & \cdots & 0  \\
\vdots & \vdots & \cdots & \vdots & \vdots \\
0 & 0 & \cdots & -1 & -1 \\
0 & 0 & \cdots & 0 & -1
\end{pmatrix}.
\end{displaymath}
Thus $\det(\phi_{n})$ is $-1 + 1 = 0$ if $n$ is odd and is $0 + 1 = 1$ if $n$ is even.
\end{proof}

This has the following rather striking consequence.

\begin{thm}\label{thm:phantom}
If $n$ is even then for any $\AA^1$-homotopy invariant $\a1$ we have
\begin{displaymath}
\a1(\sfC_{A_n}) = 0.
\end{displaymath}
\end{thm}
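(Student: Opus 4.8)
The plan is to combine the orbit-category formula for $\AA^1$-homotopy invariants (Theorem~\ref{thm:tabuada}) with the determinant computation in the preceding lemma. By definition $\sfC_{A_n} = \sfD^\mathrm{b}(\modu kA_n)/\Sigma\tau^{-1}$, so Theorem~\ref{thm:tabuada} gives a distinguished triangle
\begin{displaymath}
\xymatrix{
\a1(\sfD^\mathrm{b}(\modu kA_n)) \ar[rr]^-{\a1(\Sigma\tau^{-1}) - \id} && \a1(\sfD^\mathrm{b}(\modu kA_n)) \ar[r] & \a1(\sfC_{A_n}) \ar[r] & \Sigma\a1(\sfD^\mathrm{b}(\modu kA_n)).
}
\end{displaymath}
Thus $\a1(\sfC_{A_n}) \cong \cone(\a1(\Sigma\tau^{-1}) - \id)$, and it suffices to show this cone vanishes.

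First I would invoke the full exceptional collection $(S_n,\ldots,S_1)$ for $\sfD^\mathrm{b}(\modu kA_n)$ to identify $\a1(\sfD^\mathrm{b}(\modu kA_n)) \cong \a1(k)^{\oplus n}$ via Remark~\ref{rem:SOD}, and under this identification the endomorphism $\a1(\Sigma\tau^{-1}) - \id$ is represented by the explicit integer matrix $\phi_n$ displayed just above the lemma (using $\a1(\Sigma) = -1$ together with the formula for $\tau^{-1}$ on simples). The key input is then the lemma: since $n$ is even, $\det(\phi_n) = 1$, so $\phi_n \in GL_n(\ZZ)$.

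The final step is to observe that an automorphism has vanishing cone. More carefully: $\phi_n$ being invertible over $\ZZ$ means it is an isomorphism in $\dgcatk$-land in the sense that the self-map $\a1(\Sigma\tau^{-1}) - \id$ of $\a1(k)^{\oplus n}$ is an isomorphism in $\sfT$ (it is represented, with respect to a chosen basis of summands, by a matrix in $GL_n(\ZZ)$ acting on a direct sum of $n$ copies of a fixed object, hence is invertible with inverse given by the inverse matrix). The cone of an isomorphism in a triangulated category is zero, so $\a1(\sfC_{A_n}) = 0$.

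I do not expect any serious obstacle: the determinant computation is already handled by the lemma, and the only point requiring a moment's care is the passage from "integer matrix is invertible over $\ZZ$" to "the induced endomorphism of $\a1(k)^{\oplus n}$ in $\sfT$ is an isomorphism". This is fine because $\End_\sfT(\a1(k)^{\oplus n})$ receives a ring map from $M_n(\ZZ)$ sending $GL_n(\ZZ)$ into units, so $\phi_n$ acts invertibly; alternatively one notes that the cokernel and kernel of an integer matrix in $GL_n(\ZZ)$ vanish and the long exact sequence obtained by applying a homological functor to the triangle forces $\a1(\sfC_{A_n})$ to have all homotopy groups zero, hence (as it is built as a cone) to vanish.
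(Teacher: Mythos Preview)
Your proposal is correct and follows essentially the same route as the paper: identify $\a1(\sfC_{A_n})$ as the cone of $\a1(\Sigma\tau^{-1})-\id$ via Theorem~\ref{thm:tabuada}, then invoke the lemma to see that this map is an isomorphism (being represented by a matrix in $GL_n(\ZZ)$), so the cone vanishes. The paper's proof is just a two-sentence version of what you wrote, with the ``invertible matrix over $\ZZ$ acts invertibly on $\a1(k)^{\oplus n}$'' step already absorbed into the statement of the lemma.
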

\begin{proof}
We know $\a1(\sfC_{A_n}) = \cone(\a1(\Sigma)\a1(\tau^{-1}) - 1)$. By the lemma the map $\a1(\Sigma\tau^{-1}) - 1$ is invertible when $n$ is even and so its cone vanishes.
\end{proof}


Thus we have had some manner of ``$\AA^1$-phantoms'' under our noses for some time. As a corollary to the theorem we deduce another surprising fact. Let $\Gamma_{n}$ denote the Ginzburg DG algebra associated to $A_n$ as in \cite{Ginzburg06}*{Section~4.2}. We can then consider the DG category of perfect complexes over $\Gamma_n$, denoted $\Perf(\Gamma_n)$, and its full DG subcategory consisting of those complexes with finite dimensional total cohomology, denoted $\Perf_{\mathrm{fd}}(\Gamma_n)$.

By \cite{Amiot09}*{Corollary~3.12} there is a quasi-equivalence
\begin{displaymath}
\Perf(\Gamma_n) / \Perf_{\mathrm{fd}}(\Gamma_n) \cong \sfC_{A_n}.
\end{displaymath}
Combining this identification of the quotient with \ref{thm:phantom} gives the following computation.

\begin{cor}
If $n$ is even the inclusion $i\colon\Perf_{\mathrm{fd}}(\Gamma_n) \to \Perf(\Gamma_n)$ induces, for any $\AA^1$-homotopy invariant $\a1$, an isomorphism
\begin{displaymath}
\a1(i)\colon \a1(\Perf_{\mathrm{fd}}(\Gamma_n)) \stackrel{\sim}{\to} \a1(\Gamma_n).
\end{displaymath}
\end{cor}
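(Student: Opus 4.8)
The plan is to feed the Amiot quasi-equivalence into an arbitrary $\AA^1$-homotopy invariant and then invoke Theorem~\ref{thm:phantom}. First I would record that the sequence
\begin{displaymath}
\Perf_{\mathrm{fd}}(\Gamma_n) \xrightarrow{\;i\;} \Perf(\Gamma_n) \to \Perf(\Gamma_n)/\Perf_{\mathrm{fd}}(\Gamma_n)
\end{displaymath}
is a localization sequence of DG-categories: $\Perf_{\mathrm{fd}}(\Gamma_n)$ is the full DG subcategory of objects with finite-dimensional total cohomology, which is thick in $\Perf(\Gamma_n)$, and the third term is by definition the (Drinfeld) DG quotient. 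Any idempotent completeness issues are absorbed into Convention~\ref{conv:horror}; in the case at hand $\sfC_{A_n}$ is in fact already idempotent complete, being $\Hom$-finite with a tilting object.

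Next, applying $\a1$ and using properties (1) and (2) of an $\AA^1$-homotopy invariant, together with the quasi-equivalence $\Perf(\Gamma_n)/\Perf_{\mathrm{fd}}(\Gamma_n) \cong \sfC_{A_n}$ of \cite{Amiot09}*{Corollary~3.12}, I obtain a distinguished triangle in $\sfT$
\begin{displaymath}
\a1(\Perf_{\mathrm{fd}}(\Gamma_n)) \xrightarrow{\;\a1(i)\;} \a1(\Gamma_n) \to \a1(\sfC_{A_n}) \to \Sigma\a1(\Perf_{\mathrm{fd}}(\Gamma_n)),
\end{displaymath}
where property (1) is also what lets me write $\a1(\Gamma_n)$ for $\a1(\Perf(\Gamma_n))$.

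Finally, when $n$ is even Theorem~\ref{thm:phantom} gives $\a1(\sfC_{A_n}) = 0$, and a morphism sitting in a distinguished triangle whose remaining vertex is zero is an isomorphism; hence $\a1(i)$ is an isomorphism, which is the assertion. There is no genuinely hard step: the only point requiring care is that $i$ really does fit into a localization sequence whose Verdier quotient is (quasi-equivalent to) the cluster category $\sfC_{A_n}$, and this is precisely the content of Amiot's theorem, so the corollary drops out once Theorem~\ref{thm:phantom} is available.
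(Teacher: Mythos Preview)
Your argument is correct and is essentially identical to the paper's own proof: both apply $\a1$ to the localization sequence $\Perf_{\mathrm{fd}}(\Gamma_n)\to\Perf(\Gamma_n)\to\sfC_{A_n}$ coming from Amiot's theorem, obtain a distinguished triangle, and conclude by invoking Theorem~\ref{thm:phantom} that the third vertex vanishes when $n$ is even. The only difference is that you spell out a few routine points (thickness, idempotent completeness, the identification $\a1(\Gamma_n)=\a1(\Perf(\Gamma_n))$) that the paper leaves implicit.
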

\begin{proof}
By definition $\a1$ applied to the localization sequence
\begin{displaymath}
\Perf_{\mathrm{fd}}(\Gamma_n) \to \Perf(\Gamma_n) \to \sfC_{A_n}
\end{displaymath}
gives a triangle
\begin{displaymath}
\a1(\Perf_{\mathrm{fd}}(\Gamma_n)) \to \a1(\Perf(\Gamma_n)) \to \a1(\sfC_{A_n}) \to \Sigma \a1(\Perf_{\mathrm{fd}}(\Gamma_n)).
\end{displaymath}
By the theorem the third object in this triangle is trivial, from which the assertion follows immediately.
\end{proof}

We note that, unlike $\Perf(\Gamma_n)$ and $\sfC_{A_n}$, the category $\Perf_{\mathrm{fd}}(\Gamma_n)$ is \emph{not} smooth. Thus the corollary gives an explicit example of a smooth DG category which cannot be distinguished from a non-smooth DG subcategory by any $\AA^1$-homotopy invariant.

\begin{rem}
The above phenomenon is somewhat special to the case of Dynkin type $A$. In \cite{BKL} the Grothendieck groups of cluster categories are described for algebras to which Keller's result on pretriangulated orbit categories \cite{KellerOrbit} applies. They show in \cite{BKL}*{Proposition~3.5} that for Dynkin quivers the Grothendieck group vanishes if and only if the quiver is of type $A_n$ or $E_n$ with $n$ even. Other quivers are also discussed, for instance they show that the Grothendieck group can never vanish for a canonical algebra with only odd weights (\cite{BKL}*{Theorem~1.3}).
\end{rem}



\bibliography{greg_bib}

\end{document}